\documentclass{article}

\usepackage{amssymb, amsmath,amsthm}
\usepackage{mathrsfs}
\usepackage{amscd}
\usepackage[active]{srcltx}
\usepackage{verbatim}
\usepackage{graphicx}
\usepackage{float}

\usepackage[backend=bibtex,style=alphabetic]{biblatex}
\addbibresource{VersendaalBib.bib}
\usepackage[utf8]{inputenc}
\usepackage[T1]{fontenc}
\usepackage{lmodern}

\usepackage{hyperref} 
\usepackage{todonotes}
\usepackage{enumerate}
\usepackage{mathtools}
\usepackage{bbm}

\setcounter{tocdepth}{2}
\setlength{\parindent}{0pt}

\mathtoolsset{showonlyrefs,showmanualtags}

\newcommand\ud{\,\mathrm{d}}

\newcommand{\Nn}{\mathcal{N}}

\newcommand{\Ss}{\mathcal{S}}

\newcommand{\EE}{\mathbb{E}}

\newcommand{\NN}{\mathbb{N}}

\newcommand{\PP}{\mathbb{P}}

\newcommand{\RR}{\mathbb{R}}

\newcommand{\ZZ}{\mathbb{Z}}

\newcommand{\inp}[2]{\langle #1,#2 \rangle}

\newcommand{\Var}{\mathrm{Var}}
\newcommand{\Cov}{\mathrm{Cov}}
\newcommand{\Bin}{\mathrm{Bin}}
\newcommand{\Mult}{\mathrm{Mult}}
\newcommand{\Vol}{\mathrm{Vol}}

\renewcommand{\epsilon}{\varepsilon}

\theoremstyle{plain}
\newtheorem{theorem}{Theorem}[section]
\theoremstyle{remark}
\newtheorem{remark}[theorem]{Remark}

\theoremstyle{plain}

\newtheorem{lemma}[theorem]{Lemma}
\newtheorem{proposition}[theorem]{Proposition}

\newtheorem{conjecture}[theorem]{Conjecture}
\newtheorem{assumption}[theorem]{Assumption}

\numberwithin{equation}{section}

\allowdisplaybreaks

\begin{document}

\title{Giant component in the configuration model under geometric constraints}


\author{\renewcommand{\thefootnote}{\arabic{footnote}}
Ivan Kryven\footnotemark[1]~
and
Rik Versendaal\footnotemark[1]
}

\footnotetext[1]{
Mathematical Institute, Utrecht University, P.O. Box 80010, 3508 TA Utrecht, The Netherlands, E-mail: \texttt{i.v.kryven/r.versendaal@uu.nl}.
}

\date{\today}

\maketitle

\begin{abstract}
    We study the emergence of a giant component in the configuration model subject to additional constraints on the edges. We partition a $d$-dimensional torus into a cubic lattice with a diverging number of compartments containing vertices and  allow only local edges inside and between neighbouring compartments.  We show that, when the number of vertices per compartment grows quickly enough, a giant component emerges under similar conditions as for the standard configuration model. Conversely, when the compartment sizes are fixed, our model might not feature a giant component even if the standard configuration model does have one.
 Locally, our model resembles the configuration model, while globally, it has properties more akin to a $d$-dimensional lattice. Nonetheless the model remains analytically tractable using multitype branching processes with infinite number of types and opens new potential ways to study percolation in graphs with geometric properties.\\
    
    \textit{Keywords:} Configuration model, giant component, multitype branching process, concentration inequalities, geometric networks\\
    
    MSC2020 Classes: 05C80, 60J80
\end{abstract}

\tableofcontents

\section{Introduction}
Since the classical random graph model was first introduced by Erd\H{o}s and R\'enyi, many alternative models were studied by adding constraints to this random graph. In the configuration model one can impose an arbitrary  degree sequence. Such a choice may  affect the global connectivity of the random graph, inducing a so-called phase transition \cite{MR98,JL09,BR15}. That is, the model may or may not feature a giant connected component that involves a positive fraction of vertices depending on the chosen degree sequence. In random geometric graphs, the vertices have coordinates defined by a point process and are connected based on their proximity.  These graphs also feature a similar phase transition \cite{Pen03}, which seems to be a property of the embedding metric space. At the same time, the embedding space also induces a certain degree distribution, which one cannot control independently. In general, even though both models feature phase transition-like behaviour, there are only a few results allowing to study random geometric graphs that have a given degree distribution. 
One approach was suggested in the small world graphs \cite{watts1998collective,barbour2001small}, where a regular circular lattice or a continuous circle is randomly rewired by adding shortcuts to obtain an object that retains some of the original geometric properties while having a controlled degree distribution.
Random graph models in which both the degree distribution and geometrical features can be controlled are relevant when modelling real networks having some spatial content. \\

Our aim is to provide a simple geometric generalisation of the configuration model by additionally forbidding some pairs of vertices to be connected, hence inducing a notion of a metric. We study the following model: We consider $k\in\mathbb N$ compartments arranged into a $d$-dimensional cubic lattice on a torus and distribute the vertices equally over these compartments. Every compartment has $2d$ neighbouring compartments. We then only allow an edge to connect pairs of vertices belonging to the same or neighbouring compartments. This makes it more difficult for a giant component to emerge, as connections can only be made locally on the $d$-torus.  Our model is furthermore motivated by studying networks with geometric constraints. Since we are only allowed to connect vertices from neighbouring compartments,  such construction may be viewed as a random geometric graph on ${\bf Z}_k^d$ that has  a given degree distribution. When ${\bf Z}_k^d$ is embedded in the $d$-torus, the larger $k$ is, the closer the connected vertices are together.  However useful ${\bf Z}_k^d$ model is for applications, we also hope that the techniques used in this study will in future inspire investigation of the classical random geometric graphs, for example in $\mathbb R^d$.\\

Our technique relies on the idea that the exploration of components in the random graph can be linked to a branching process. However, in comparison to the standard setting, this connection is only valid for a small number of exploration steps, and consequently, this only allows us to prove that locally-large components emerge.  Since the number of compartments tends to infinity, these components become `more and more local'. 
A delicate step in this reasoning is to show that a growing number of local components will be simultaneously present with high probability, so that one giant `super' component can be formed from infinitely many local components connecting together.
 To do this, we introduce a countably infinite number of types into the exploration process to track how each explored component spreads through the different compartments.  This allows us to connect the exploration process to a multitype branching process, where the type of a vertex represents the compartment it belongs to.  This connection allows us to prove that locally-large components spread through a sufficient number of compartments, occupying a positive fraction of the vertices in each of them, and that these components are connected to each other with high probability -- resulting in a giant component.
 To this end, we need to analyse the probability that a local connected component emerges much more carefully in comparison to the standard setting and to obtain precise quantitative bounds for these probabilities.
 \\ 

We believe that our multitype exploration technique can be reapplied to many different  settings beyond the ${\bf Z}_k^d$ arrangement of compartments. For example, one may use heterogeneous compartments to impose clustered structure in a network or long range dependencies between vertex degrees.\\

This article is structured as follows. In Section \ref{section:model} we introduce the model we are studying and state our main theorem. As with the standard configuration model, the proof of our main theorem relies on building a connection with an exploration process, which in our case is a multitype branching process. We introduce this process in Section \ref{section:branching}, where we also derive some relevant properties. With all preparations done, Section \ref{section:proof} is dedicated to proving our main theorem, which is done in a number of propositions. Finally, in Section \ref{section:counterexample} we provide an example that shows that the geometric constraints give rise to different behaviour compared to the configuration model without geometric constraints.


\section{Compartment model on a $d$-dimensional torus}\label{section:model}

\begin{figure}
\begin{center}
\includegraphics[width=0.65\textwidth]{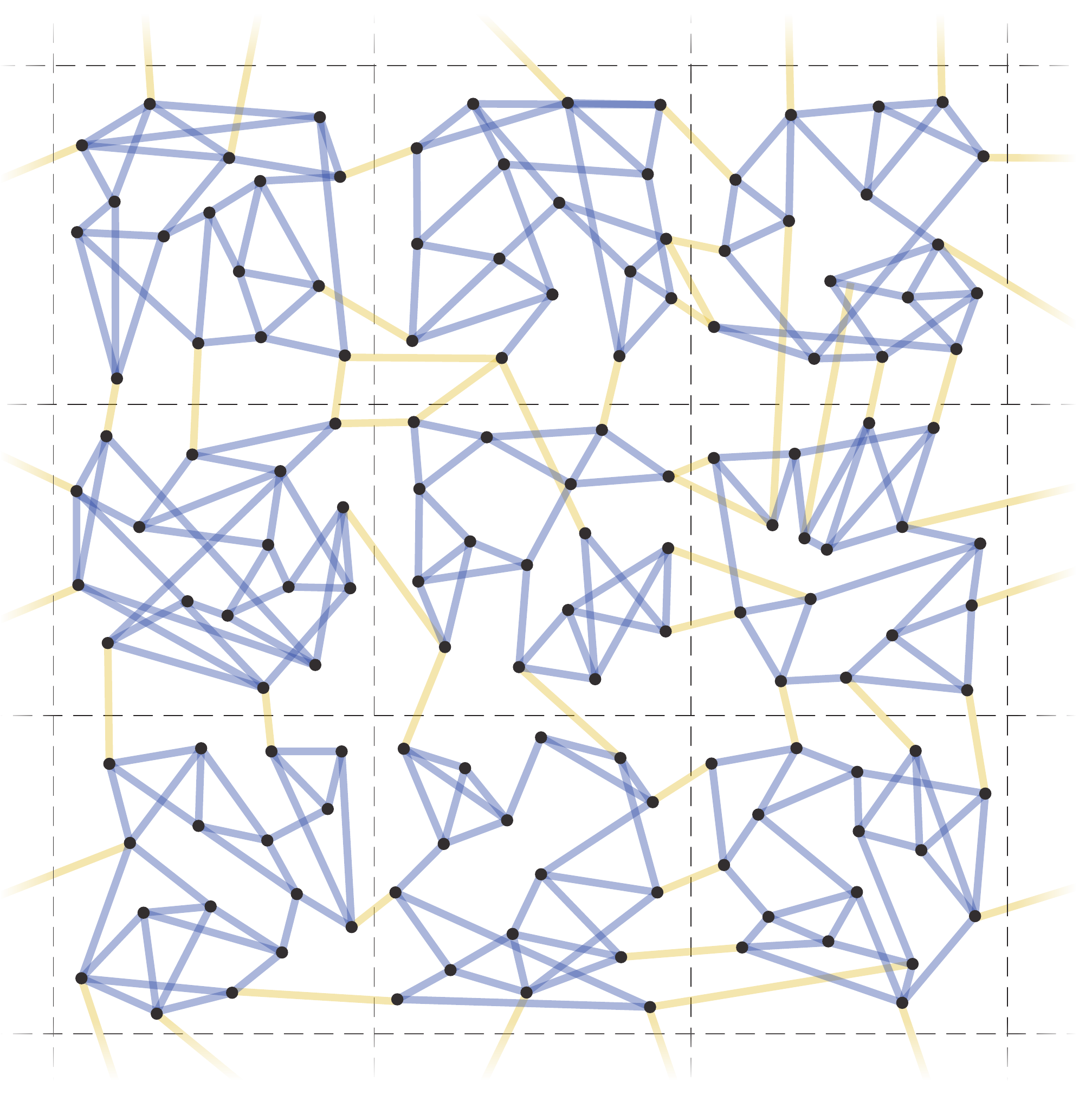}
\caption{Fragment of the configuration model with compartments arranged into a square lattice.}
\label{fig}
\end{center}
\end{figure}
The model we are studying is illustrated in Figure \ref{fig} for $d=2$.
In general, let $d \geq 1$ be an integer. For every $n \in \NN$, we consider $k(n)^d$ compartments $\{C_i^n\}_{i \in [k(n)]^d}$, where $[k(n)] := \{1,\ldots,k(n)\}$. We assume each compartment contains $m(n)$ vertices and we define $V_n = \bigcup_{i \in [k(n)]^d} C_i$ as the set of vertices. Our aim is to study graphs on $V_n$ satisfying two types of constraints:

\begin{enumerate}
    \item \emph{Constraint on allowed connections:} Vertices $x,y \in V_n$ can only be connected if $x \in C_i, y \in C_j$ with $|i-j|_1 \leq 1$. Here, we identify $k(n)$ with 0, allowing for instance also edges between compartments $C_{(i,k(n))}$ and $C_{(i,1)}$ with $i \in [k(n)-1]^d$. This results in the $d$-dimensional torus structure. 
    \item \emph{Degree constraint}: The vertices have prescribed degrees, given by a sequence $d_n = \{d(1),\ldots,d(k(n)m(n))\}$ of non-negative integers. We will refer to $d_n$ as the \emph{degree sequence}.
\end{enumerate}

In principle, the random graph $G_n = (V_n,E_n)$ satisfying the above-mentioned constraints can be constructed algorithmically as follows: The degree of a vertex $x \in V_n$ is represented by $d_n(x)$ half-edges. At each iteration we choose uniformly a pair of half-edges which are allowed to be connected together. We repeat this until no matches can be made. We refer to this model as the \emph{compartment model on $d$-torus}. 

First of all, note that $G_n$ is in general a multi-graph since we do not exclude self-loops or multi-edges. Furthermore, it might happen that we do not satisfy the full degree sequence, even if we assume the sum of the degrees is even. However, when the construction terminates, at most one half-edge per compartment will be unmatched. This will be no problem, since we will be assuming that $m(n)$, the amount of vertices per compartment, tends to infinity.\\  

The compartment model on the $d$-torus is closely related to a random geometric graph on the torus. Indeed, the number of compartments  $k(n)$ is related to the distance between vertices that can be connected. However, in the compartment model, the neighbourhoods of the vertices are homogenized, in the sense that each vertex in a compartment has the same neighbours it can be connected to.

\subsection{Main theorem}

Our main result is concerned with providing sufficient conditions under which the random graphs $G_n$ asymptotically contain a giant component with high probability. Moreover, we will also determine its size. In this section we collect all of our assumptions.\\

First of all, we assume that $V_n$ asymptotically contains $n$ vertices. Noting that $|V_n| = k(n)^dm(n)$ we therefore assume that
$$
\lim_{n\to\infty} \frac{k(n)^dm(n)}{n} = 1.
$$

Second, we will assume that $\lim_{n\to\infty} k(n) = \infty$. On the one hand, this reflects the idea that vertices are only allowed to be connected when they are very close together. On the other hand, this assures that our model is clearly distinguished from the standard configuration model. Indeed, if we only have finitely many compartments, then it should be possible to deduce from the standard configuration model that a giant component emerges locally. It then remains to show that (finitely many) of those connect together with high probability.\\



In Section \ref{section:counterexample} we will see that if the number of vertices per compartment becomes fixed, then a giant component does not necessarily emerge, 
even if the degree sequence satisfies the conditions of our main theorem. Therefore, we will assume that $m(n)$, the number of vertices per compartment, tends to infinity. In particular, we will assume that
$$
\lim_{n\to\infty} \frac{n}{m(n)^k} = 0
$$
for some $k \in \NN$, which can be equivalently stated as there exist $C,\alpha > 0$ such that
$$
m(n) \geq Cn^\alpha.
$$

Apart from assumptions on the graph structure, we also need assumptions on the degree sequences $d_n$. These are the same for the standard configuration model, see e.g. \cite{BR15,Hof17,Dur07}. In what follows, we denote by $n_j(d_n,C_i^n)$ the amount of vertices of degree $j$ in compartment $C_i^n$. Furthermore, we define $\mu_n(d_n,C_i^n)$ by 
$$
\mu_n(d_n,C_i^n) := \frac12\sum_{x \in C_i^n} d(x) = \frac12\sum_{j=1}^\infty jn_j(d_n,C_i^n).
$$
Using this notation, we make the following assumption on the convergence of the degree sequence $d_n$. 

\begin{assumption}[Convergent degree sequence]\label{assumption:convergent_deg_seq}
The degree sequence $d_n$ converges to a distribution $D$ in the following sense:
\begin{enumerate}
    \item For every $\epsilon > 0$ there exists an $N$ such that for all $n \geq N$ and all $i \in [k(n)]^d$ we have
    $$
    \left|\frac{n_j(d_n,C_i^n)}{m(n)} - \PP(D = j)\right| < \epsilon
    $$
    for all $j$.
    \item For every $\epsilon > 0$ there exists an $N$ such that for all $n \geq N$ and all $i \in [k(n)]^d$ we have
    $$
    \left|\frac{\mu_n(d_n,C_i^n)}{m(n)} - \frac{\EE(D)}{2}\right| < \epsilon.
    $$
\end{enumerate}
\end{assumption}

We are now ready to state the main theorem.

\begin{theorem}\label{theorem:giant_component}
Consider the compartment model on the $d$-torus with $k(n)^d$ compartments with $m(n)$ vertices each, so that $\lim_{n\to\infty} \frac{m(n)k(n)^d}{n} = 1$. Assume that $\lim_{n\to\infty} k(n) = \infty$ and that there exists $k \in \NN$ such that
$$
\lim_{n\to\infty} \frac{n}{m(n)^k} = 0.
$$
Furthermore, for every $n$ let $d_n$ be a degree sequence on $m(n)k(n)^d$ vertices satisfying Assumption \ref{assumption:convergent_deg_seq} with distribution $D$. Assume $E(D(D-2)) > 0$ and that there exists a $t > 0$ such that $\EE(e^{tD}) < \infty$. If we denote by $L_1(G_n)$ the largest component in $G_n$, then there exists a $\rho \in [0,1)$ such that
$$
\lim_{n\to\infty} \frac{L_1(G_n)}{n} = 1 - \rho
$$
in probability. Furthermore, with high probability, there is no other cluster of size more than $\beta\log m(n)$ for some $\beta > 0$.
\end{theorem}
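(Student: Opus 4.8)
The plan is to establish the result through the standard dichotomy between the "exploration process" of connected components in $G_n$ and a suitable multitype branching process, but carefully tracking which compartment each explored vertex lies in. I would proceed in roughly four stages.

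First, I would set up the exploration of a connected component started from a uniformly random vertex $x \in V_n$, revealing half-edges one at a time and pairing them. As is usual for the configuration model, in the early stages (while only $o(m(n))$ half-edges in any given compartment have been used) the number of offspring of an explored vertex is, up to small errors controlled by Assumption \ref{assumption:convergent_deg_seq}, distributed as the size-biased version $\hat D$ of $D$ minus one, and the compartment of a child is (close to) uniform among the $\le 2d+1$ admissible compartments, with weights proportional to the total degree in each compartment. Thus the exploration is well-approximated by a multitype branching process whose types are compartments (Section \ref{section:branching}), and whose mean offspring operator has Perron root governed by $\EE(D(D-2))/\EE(D)$, which is supercritical precisely when $\EE(D(D-2))>0$. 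The exponential moment assumption $\EE(e^{tD})<\infty$ is what guarantees that the coupling between the exploration and the branching process remains valid for order $\log m(n)$ generations with the error terms summable (Chernoff-type concentration on the number of half-edges used per compartment), and that the extinction probability $\rho$ of the approximating process lies in $[0,1)$.

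Second, for the subcritical-type conclusion (no second large cluster), I would run the usual argument: in the dual/subcritical regime, or simply using that a supercritical branching process conditioned on extinction behaves subcritically, the probability that the explored component from a fixed vertex exceeds size $\beta \log m(n)$ without the branching-process approximation breaking is exponentially small in $\beta\log m(n)$, hence polynomially small in $m(n)$; choosing $\beta$ large enough and taking a union bound over the $n = \Theta(m(n)^k)$ vertices (here the assumption $n/m(n)^k \to 0$ is exactly what makes the union bound close) shows that with high probability every component is either "small" ($\le \beta\log m(n)$) or "locally giant." This simultaneously shows the largest component cannot coexist with a second large one of the wrong type.

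Third, and this is the crux, I would show that the locally-giant components actually merge into a single giant component of asymptotic size $(1-\rho)n$. The difficulty, as the introduction flags, is that the branching-process coupling only survives $O(\log m(n))$ steps, so a single exploration only certifies that a component occupies a positive fraction of vertices in each of $O(\text{poly}\log)$-many compartments — not all $k(n)^d$ of them. The strategy is: (i) show via the multitype analysis that a locally-giant component, with probability bounded below, spreads to occupy a fraction $\ge \delta$ of the vertices in every compartment within some bounded-diameter block of compartments, for a fixed $\delta>0$ depending on $\rho$; (ii) since $m(n)\to\infty$, within any single compartment two independent explorations each occupying a $\delta$-fraction of that compartment's half-edges must share an edge with probability $\to 1$ (a birthday-type / second-moment estimate, again using $m(n)\to\infty$), so locally-giant components overlapping in a compartment are in fact the same component; (iii) a percolation/sprinkling argument on the coarse lattice $\mathbf Z_k^d$ of compartments then shows these blocks chain together across the whole torus with high probability, yielding one component touching a $\delta$-fraction of every compartment; (iv) finally a second-moment computation on the total number of vertices in locally-giant components pins the size down to $(1-\rho+o(1))n$, using that the fraction of vertices whose exploration is locally-giant converges to $1-\rho$ and that (by (ii)-(iii)) almost all of them lie in the \emph{same} component.

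The main obstacle is clearly stage three, specifically the passage from "many local components" to "one global component": one must make quantitative the claim that locally-giant explorations spread to a positive density in neighbouring compartments and then control the dependence between explorations started in different compartments well enough to run a percolation-style connection argument on $\mathbf Z_k^d$. The exponential moment hypothesis and the polynomial lower bound $m(n)\ge Cn^\alpha$ are the two levers that make the error terms in the coupling small enough (polynomially in $n$) to survive the union bounds over compartments and over the $O(\log m(n))$-length exploration windows; I expect the bulk of Section \ref{section:proof} to be devoted to these concentration estimates and to the merging argument.
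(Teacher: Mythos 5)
Your proposal follows essentially the same route as the paper: an exploration process stochastically bounded below by a multitype branching process whose types are the compartments, failure probabilities made polynomially small in $m(n)$ so that union bounds over the $k(n)^d \approx n/m(n)$ compartments and over vertices close (exactly where $n/m(n)^k \to 0$ is used), connection of locally-large components in adjacent compartments via a half-edge pairing estimate, and a Chebyshev/second-moment argument identifying the size as $(1-\rho)n$ through the density of vertices lying in components larger than $\beta\log m(n)$. The only deviations are inessential: the paper needs no sprinkling (a direct union bound over compartments and adjacent pairs suffices), and it establishes only that a locally-large component occupies $\epsilon m(n)^{2/3-\tau}$ vertices of a compartment rather than a constant fraction, which already suffices because the probability of no edge between sets $A$ and $B$ in neighbouring compartments is at most $\exp\left(-C|A||B|/m(n)\right)$.
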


\begin{remark}\label{remark:size_biased}
The constant $\rho$ in Theorem \ref{theorem:giant_component} can be determined from the distribution $D$. More precisely, we define the distribution $D^*$ by $\PP(D^* = i) = \frac{i\PP(D = i)}{\EE(D)}$, the so called \emph{size-biased degree distribution}. We can then interpret $\rho$ as the extinction probability of the Galton-Watson tree where the root has offspring distribution $D$, and all other individuals have offspring distribution $Z_D = D^* - 1$. The condition $E(D(D-2)) > 0$ implies that $\EE(Z_D) > 1$. In particular, this implies that the Galton-Watson tree survives with positive probability, implying that $\rho < 1$. 
\end{remark}



\section{Branching processes} \label{section:branching}

Studying components in random graphs is closely related to studying branching processes. This occurs when we explore components of a graph from a given vertex. Then, the next generation of the branching process  resembles the neighbours in the graph of the current generation. Such exploration may traverse from one compartment to the other. Therefore, we will make use of a multitype branching process to keep track of the compartment we are in. In this section we will shortly introduce these processes, and collect some necessary results. For a more thorough treatment, see e.g. \cite{AN72,AL06}.  

\subsection{Galton-Watson tree}

The prototypical example of a branching process is the Galton-Watson tree, which models the evolution of a population in which every individual of a generation gets a random number of children. Furthermore, it is assumed that the number of children of different individuals are independent, and follow the same distribution.\\

More precisely, let $D$ be a probability distribution on the nonnegative integers and denote by $Z_n$ the number of individuals in generation $n$. For every $n$, let $X_1^n,\ldots,X_{Z_n}^n$ be independent random variables with distribution $D$. Then
$$
Z_{n+1} = \sum_{i=1}^{Z_n} X_i^n. 
$$

An important question regarding such processes is whether they become extinct or grow on indefinitely. We define the \emph{extinction probability} by
$$
\rho(D) = \lim_{n\to\infty} \PP(Z_n = 0).
$$

If $\EE(D) < 1$ then the process becomes almost surely extinct, i.e. $\rho(D) = 1$. If $\EE(D) > 1$ then the process has a positive probability to grow on indefinitely. Moreover, this probability can be computed from the generating function of the distribution $D$. In particular, the extinction probability is the largest solution in $[0,1]$ of the equation
$$
x = \sum_{i=0}^\infty \PP(D = i)x^i.
$$

\subsection{Multitype branching processes} \label{subsection:multitype_branching}

For our purposes, it is not sufficient to understand how large components grow. We also need information on how components spread through different compartments. In order to study this, we consider a branching process with types, where type $i \in I^d \subset \ZZ^d$  of a vertex represents its compartment. We denote generation $n$ of the branching process by a matrix $Z_n$ of size $|I|^d$, where $Z_n(i)$ is the number of individuals of type $i$ in generation $n$. We denote by $|Z_n|$ the size of generation $n$, i.e.,
$$
|Z_n| = \sum_{i \in I^d} Z_n(i).
$$
For every type $i \in I^d$ we have an offspring distribution $D_i$, which is now a distribution on matrices representing the types of the offspring. For every $n$ and every $i \in I$, let $X_1^{n,i},\dots,X_{Z_n(i)}^{n,i}$ be independent random variables with distribution $D_i$. We then have that
$$
Z_{n+1} = \sum_{i\in I} \sum_{j=1}^{Z_n(i)} X_j^{n,i}.
$$

When $I$ is finite, one looks at the matrix $M$ of expected offspring to study the extinction of such processes. If we, for instance, assume that $M^k$ has only positive entries for some  sufficiently large $k$, then the largest eigenvalue $\rho_{max}$ of $M$ determines whether extinction occurs almost surely or whether there is some positive probability that the tree grows indefinitely,  see e.g. \cite{Har63,Dur07}. When $I$ is countably infinite, the conditions for extinction are  more subtle, and we refer to \cite{Moy64,HLN13} among others. 

\subsubsection{Assigning types independently}

We are specifically interested in the case where each offspring of a vertex is independently assigned a type according to some distribution. In this case, the offspring distribution is a multinomial distribution. Our claim is that the distribution of individuals over the types in generation $n$ of such a multitype branching process can be found by running a number of $n$-step independent random walks equal to the size of the $n$-th generation. 

More precisely, let $I^d \subset \ZZ^d$ be the state space. Let $N$ be a random variable taking values in the nonnegative integers, denoting the number of children an individual will have. Furthermore, for $i \in I^d$, let $p^i = (p_j^i)_{j\in I^d}$ be a probability distribution on $I^d$. Let $D_i$ denote a multinomial distribution with $N$ trials and probability vector $p^i$, which we will take as offspring distribution of a type $i$ individual. Finally, we denote by $(Z_n)_n$ the associated multitype branching process.

Let us now define the inhomogeneous random walk $(S_n)_n$ with which we want to compare the branching process $(Z_n)_n$. Since the walk is inhomogeneous, we will construct it recursively. Let $S_0$ be distributed according to a uniformly random individual of $Z_0$. Now, if $S_n$ is given, we define $S_{n+1}$ as the random variable with distribution $p^{S_n}$. The following proposition relates this random walk to the branching process with multinomial offspring distribution.

\begin{proposition}\label{prop:Branching_process_RW}
Let $(Z_n)_n$ be a multitype branching process with multinomial offspring distribution. Let $(S_n)_n$ be the associated random walk defined above, and let $S^1,S^2,\ldots,S^{|Z_n|}$ be independent copies of $S_n$. For $i \in I^d$, let $E_i$ be the matrix such that $E_i(j) = \delta_{ij}$. Then $Z_n$ is in distribution equal to 
$$
\Ss_n = \sum_{i=1}^{|Z_n|} E_{S^i}.
$$
\end{proposition}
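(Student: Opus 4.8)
The plan is to prove the claim by induction on $n$, tracking not just the generation sizes but the full joint law of (generation-$n$ population over types, number of individuals in generation $n$). The base case $n=0$ is immediate: $Z_0$ consists of $|Z_0|$ individuals whose types are i.i.d.\ draws from the law of $S_0$, and $\Ss_0 = \sum_{i=1}^{|Z_0|} E_{S^i}$ is by definition exactly that. So the content is the inductive step.

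For the inductive step, I would condition on the entire generation-$n$ configuration $Z_n$. Given $Z_n = z$ (a matrix with $|z| = \sum_i z(i)$ individuals), generation $n+1$ is formed as $Z_{n+1} = \sum_{i \in I^d}\sum_{j=1}^{z(i)} X_j^{n,i}$ where each $X_j^{n,i} \sim D_i$ is multinomial$(N, p^i)$ with $N$ itself random (the number of children). The key observation is a \emph{thinning/decomposition} property of the multinomial: a multinomial$(N,p^i)$ vector of offspring types is distributionally the same as first drawing $N$ children and then assigning to each child independently a type from $p^i$. Hence, conditionally on $Z_n = z$, we can realize $Z_{n+1}$ by: for each of the $|z|$ individuals in generation $n$, say an individual of type $i$, draw its number of children $N$, and give each child an independent type $\sim p^i$; then $Z_{n+1}(\ell)$ counts all children assigned type $\ell$. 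Grouping by children rather than by parents: $Z_{n+1}$ is a sum, over all children $c$ in generation $n+1$, of $E_{T_c}$ where $T_c$ has law $p^{i(c)}$ and $i(c)$ is the (generation-$n$) type of $c$'s parent, and the $T_c$ are conditionally independent given the parents' types.

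Now I invoke the inductive hypothesis: $Z_n \overset{d}{=} \Ss_n = \sum_{i=1}^{|Z_n|} E_{S^i}$, meaning the multiset of types present in generation $n$ is an i.i.d.\ sample (of random size $|Z_n|$) from the law of $S_n$. Therefore the parent type $i(c)$ of a uniformly chosen child is distributed as $S_n$, and the child's own type $T_c$ is then distributed as the $p^{S_n}$-mixture, which is precisely the law of $S_{n+1}$ by the recursive definition of the walk. More carefully, one must check the joint structure: conditionally on the number of children $|Z_{n+1}|$, the children's types are i.i.d.\ copies of $S_{n+1}$. This follows because distinct children — even those sharing a parent — receive independent type assignments (independence of the multinomial's "which cell" choices across trials), and across parents the assignments are independent by construction; combined with the i.i.d.-over-parents structure from the inductive hypothesis, the children's types form an i.i.d.\ family with common law $S_{n+1} = $ law of $p^{S_n}$-transition started from $S_n$. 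Hence $Z_{n+1} \overset{d}{=} \sum_{i=1}^{|Z_{n+1}|} E_{S^i}$ with $S^i$ i.i.d.\ copies of $S_{n+1}$, completing the induction.

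The main obstacle — and the step deserving the most care — is making the independence bookkeeping rigorous when multiple children share a parent: one needs that the number of children and the types of those children decouple in the right way, so that after summing over parents the resulting children-types are genuinely i.i.d.\ and their count $|Z_{n+1}|$ can be conditioned on without destroying that. The clean way to handle this is the multinomial decomposition stated above (multinomial $=$ fixed number of trials each independently landing in a cell), applied conditionally on all the offspring counts $N$; this reduces everything to independent categorical draws and the only remaining input is that the parents' types were i.i.d.\ by the inductive hypothesis. A secondary point to be careful about is that $|Z_n|$ is random and correlated with the type configuration; but since the inductive hypothesis already packages this as "i.i.d.\ sample of random size," and the map from generation $n$ to generation $n+1$ treats each parent symmetrically, this causes no difficulty beyond careful phrasing.
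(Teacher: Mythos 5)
Your argument follows the same inductive route as the paper's own proof (the paper phrases the step via ``independent uniform samples from the population'' where you use the multinomial thinning, but the substance is identical), and it inherits the same critical leap. The gap is the inference from ``children receive conditionally independent type assignments given their parents' types'' plus ``parents' types are i.i.d.\ copies of $S_n$'' to ``the children's types are unconditionally i.i.d.\ copies of $S_{n+1}$.'' That inference is invalid: two children of the \emph{same} parent have types that are conditionally i.i.d.\ given the parent's type, but after the random parent type is integrated out they are positively correlated (both sit one $p$-step away from the same random location); the same happens for cousins through shared earlier ancestors. So what the construction actually yields is an exchangeable family whose one-dimensional marginals are the law of $S_{n+1}$, not an independent family, and the induction as written does not close. (The base case as you state it is also only correct when $Z_0$ has a single individual or itself has the i.i.d.\ structure, but that is minor compared to the main issue.)

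To see that the step genuinely fails rather than needing ``careful phrasing,'' take $d=1$, $N\equiv 2$, $Z_0=E_0$, $p^i$ uniform on $\{i-1,i,i+1\}$. Generation $1$ is indeed an i.i.d.\ pair of copies of $S_1$, but for generation $2$ one computes $\PP(Z_2 = 4E_2) = \PP(\text{both gen-1 individuals at }1)\cdot(1/3)^4 = 3^{-6} = 1/729$, whereas the proposed representation gives $\PP(\Ss_2 = 4E_2) = \PP(S_2=2)^4 = 9^{-4} = 1/6561$, since $|Z_2|=4$ deterministically. Thus the claimed distributional identity already fails at $n=2$; because $N$ does not depend on the type, what is true (and provable by your conditioning argument) is the weaker statement that, given the genealogy, each generation-$n$ individual's type is distributed as $S_n$, giving exchangeability and identities such as $\EE Z_n(k) = \EE|Z_n|\,\PP(S_n=k)$, but not joint equality with a sum of independent walkers. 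For what it is worth, the paper's proof makes exactly the same unjustified move (asserting that the $Z_n(i)$ are independent and that i.i.d.\ uniform samples reconstruct the population), so you have faithfully reproduced its approach; but the multinomial decomposition you add does not repair the step, because the dependence arises precisely when the shared random parent types are averaged out.
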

\begin{proof}
We will prove this using induction on $n$. First of all, note that $\Ss_0$ is equal in distribution to $Z_0$, since $S_0$ is distributed according to a uniformly random individual of $Z_0$.

Now suppose that $\Ss_n$ has the same distribution as $Z_n$. Observe that by definition, the random variables $Z_n(i)$ for $i \in I^d$ are independent. Therefore, if $X_1,\ldots,X_{|Z_{n+1}|}$ are independent samples taken uniformly from the population $Z_{n+1}$, then
$$
Z_{n+1} = \sum_{j=1}^{|Z_{n+1}|} E_{X_j}
$$
in distribution. Now note that the distribution of $X_j$ is equal to $p^{Y_n}$ where $Y_n$ is a uniform sample from the population $Z_n$. Since $Z_n$ is equal in distribution to $\Ss_n$, this means that $X_j$ has distribution $p^{S_n}$. As a consequence, we find that $X_j$ is equal in distribution to $S_{n+1}$. Putting everything together, we conclude that $Z_{n+1} = \Ss_{n+1}$ in distribution.
\end{proof}




The above identification of the multitype branching process as a sum of random walks is useful in deriving properties of the distribution of its $n$-th generation. In particular, we consider the specific case where $I = \ZZ$ and 
$$
p^i = \frac{1}{2d+1}\sum_{|j-i|_1 \leq 1} E_j.
$$
One can show that in generation $n$, all types that are at most at distance $\sqrt n$ from the starting type $Z_0 = E_0$ are present with a significant fraction. Before we can turn this in a rigorous statement, we first need the following result on the associated random walk.

\begin{lemma}\label{lemma:spreading_prob}
Let $X_1,X_2,\ldots$ be a sequence of i.i.d. random variables with $\PP(X_i = \pm e_j) = \PP(X_i = 0) = \frac1{2d+1}$ for all $j = 1,\ldots,d$, where $\{e_1,\ldots,e_d\}$ denotes the standard basis of $\RR^d$. Define $S_n = \sum_{i=1}^n X_i$. Then there exists a $\delta > 0$ such that for $n$ large enough we have
$$
\PP(S_{n^2} = v) \geq \delta\PP(S_{n^2} = 0).
$$
for all $v \in \ZZ^d$ with $|v|_1 = n$. Moreover, for $n$ large enough we have
$$
\PP(S_n = v) \geq \delta \PP(S_n = 0).
$$
for all $v \in \ZZ^d$ with $|v|_1 \leq \sqrt n$.
\end{lemma}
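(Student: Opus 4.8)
The plan is to derive both inequalities from the local central limit theorem (LCLT) for the lazy walk $S_n$, after noting that the second inequality already contains the first. The increment $X_i$ has mean zero, and since at each step at most one coordinate changes, its covariance matrix is $\sigma^2 I_d$ with $\sigma^2 = \frac{2}{2d+1}$; because $\PP(X_i = 0) = \frac1{2d+1} > 0$ the walk is aperiodic and its increments generate $\ZZ^d$, so the LCLT holds in its uniform form: there is a sequence $\epsilon_n \to 0$ with
$$
\sup_{v \in \ZZ^d}\left| n^{d/2}\,\PP(S_n = v) - \varphi\!\left(\tfrac{v}{\sqrt n}\right)\right| \leq \epsilon_n ,
$$
where $\varphi(x) = (2\pi\sigma^2)^{-d/2}\exp\!\bigl(-|x|_2^2/(2\sigma^2)\bigr)$ is the centred Gaussian density with covariance $\sigma^2 I_d$. (Any standard reference -- Spitzer, or Lawler--Limic -- gives this for finite-range aperiodic walks; the only thing that matters is the \emph{uniformity} of the error.)

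Next I would exploit that on the range of interest the Gaussian factor is bounded below by a fixed constant: if $|v|_1 \leq \sqrt n$ then $|v|_2^2 \leq |v|_1^2 \leq n$, hence $\varphi(v/\sqrt n) \geq \varphi(0)\,e^{-1/(2\sigma^2)} = c_d\,\varphi(0)$ with $c_d := e^{-(2d+1)/4} \in (0,1)$. Combined with the LCLT this gives
$$
\PP(S_n = v) \geq n^{-d/2}\bigl(c_d\varphi(0) - \epsilon_n\bigr) , \qquad \PP(S_n = 0) \leq n^{-d/2}\bigl(\varphi(0) + \epsilon_n\bigr) ,
$$
so $\PP(S_n = v)/\PP(S_n = 0) \geq (c_d\varphi(0) - \epsilon_n)/(\varphi(0) + \epsilon_n) \to c_d$; hence for all $n$ large and all $v$ with $|v|_1 \leq \sqrt n$ one has $\PP(S_n = v) \geq \delta\,\PP(S_n = 0)$ with $\delta := c_d/2$. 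This is the ``moreover'' statement, and the first inequality follows by applying it at time $n^2$: since $\{v : |v|_1 = n\} \subseteq \{v : |v|_1 \leq \sqrt{n^2}\}$, we get $\PP(S_{n^2} = v) \geq \delta\,\PP(S_{n^2} = 0)$ for $|v|_1 = n$ and $n$ large, with the same $\delta$.

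I do not expect a genuine obstacle: everything reduces to the uniform, quantitative LCLT for a finite-range aperiodic walk, which is classical. If one insisted on a self-contained proof, the work would move to a Fourier-inversion estimate -- writing $\PP(S_n = v) = (2\pi)^{-d}\int_{[-\pi,\pi]^d}\widehat{X}(\theta)^n e^{-i\theta\cdot v}\,\dd\theta$ with $\widehat X(\theta) = \frac{1}{2d+1}\bigl(1 + 2\sum_{j}\cos\theta_j\bigr) = 1 - \frac{\sigma^2}{2}|\theta|_2^2 + O(|\theta|^4)$, isolating the contribution of a shrinking neighbourhood of $\theta = 0$, and bounding $|\widehat X(\theta)|$ strictly below $1$ on the complement -- and that Fourier bound would then be the main technical point.
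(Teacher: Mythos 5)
Your proof is correct, and it takes a genuinely different route from the paper's. The paper avoids the local CLT: it gets the upper bound $\PP(S_{n^2}=0)\leq Cn^{-d}$ from the Kolmogorov--Rogozin concentration inequality, gets a lower bound on the probability of the $\ell^1$-annulus $\{n\leq |S_{n^2}|_1\leq(1+\epsilon)n\}$ from the integral CLT, and then converts this into a pointwise bound by dividing by the roughly $(\epsilon n)^d$ lattice points in the annulus, relying on the claims that $\PP(S_{n^2}=v)$ is non-increasing in $|v|_1$ and constant on $\ell^1$-spheres; the ``moreover'' part is then read off from the same monotonicity. You instead quote the uniform local CLT for an aperiodic, mean-zero, finite-range walk whose increments generate $\ZZ^d$, which simultaneously gives the lower bound at every $v$ with $|v|_1\leq\sqrt n$ (via $|v|_2\leq|v|_1$) and the upper bound at $0$, and you rightly note that the first inequality is just the second one evaluated at time $n^2$. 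Your version is shorter and more robust: it uses the correct increment covariance $\frac{2}{2d+1}I_d$ (the matrix displayed in the paper's proof is the covariance of the multinomial category indicator rather than of the displacement, though only positive definiteness matters there), and it bypasses the monotonicity/equality claims on $\ell^1$-spheres, which are delicate -- the Gaussian profile depends on $|v|_2$, not $|v|_1$, so exact equality for $|v|_1=|w|_1$ is not literally true and monotonicity in $|v|_1$ would itself require justification. The cost is importing the uniform LCLT as a black box (your Fourier-inversion sketch is the standard self-contained substitute), but the paper's route equally imports Kolmogorov--Rogozin, so nothing is lost in elementarity, and both arguments yield the uniform constant $\delta$ needed later in Proposition \ref{prop:multitype_concentration}.
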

\begin{proof}
By the Kolmogorov-Rogozin inequality (\cite[Theorem 3]{Ess66}, see also \cite{Kol58,Rog61}) there exists a constant $C > 0$ such that
$$
\PP(S_{n^2} = 0) \leq Cn^{-d}.
$$

We are done once we show that for $v \in \ZZ^d$ with $|v|_1 = n$ we have 
$$
\PP(S_{n^2} = v) \geq cn^{-d}
$$
for some $c > 0$ (independent of $v$). To this end, note that $\EE(X_1) = 0$ with covariance matrix $\Sigma$ given by 
$$
\Sigma_{ij} = 
\begin{cases}
\frac{2d}{(2d+1)^2}     &   i = j\\
-\frac{1}{(2d+1)^2}     &   i \neq j\\
\end{cases}.
$$

Therefore, by the central limit theorem we find that
$$
\frac{1}{n}S_{n^2} \Rightarrow \Nn\left(0,\Sigma\right).
$$

Now define for $0 \leq r < s$ the annulus $A_1(r,s) \subset \RR^d$ by
$$
A_1(r,s) = \{x \in \RR^d| r \leq |x|_1 \leq s\}.
$$

We then find that asymptotically we have
$$
\PP(n \leq |S_{n^2}|_1 \leq (1+\epsilon) n) \geq (2\pi)^{-\frac d2}(\det\Sigma)^{-\frac 12}\int_{A_1(1,1+\epsilon)} \exp\left(-\frac12\inp{x}{\Sigma^{-1}x}\right)\ud x.
$$

To compute this further, we use the spectrum of $\Sigma$. It turns out that this is given by $\lambda_1 = \frac{d+1}{(2d+1)^2}$ with multiplicity 1 and $\lambda_2 = \frac{1}{2d+1}$ with multiplicity $d-1$. This implies that
$$
\det\Sigma = \frac{(d+1)}{(2d+1)^(d+1)}.
$$
Furthermore, we have that
$$
|\inp{x}{\Sigma^{-1}x}| \leq ||\Sigma^{-1}||_2||x||_2^2 = \lambda_{\max}(\Sigma^{-1})||x||_2^2 = \frac{(2d+1)^2}{d+1}||x||_2^2 \leq \frac{(2d+1)^2}{d+1}||x||_1^2.
$$
Therefore, on $A(1,1+\epsilon)$, we have
$$
|\inp{x}{\Sigma^{-1}x}| \leq \frac{(2d+1)^2}{d+1}(1+\epsilon)^2 = C(d)(1+\epsilon)^2.
$$
Collecting everything, we find that
$$
\PP(n \leq |S_{n^2}|_1 \leq (1+\epsilon) n) \geq (2\pi)^{-\frac d2}(d+1)^{-\frac 12}(2d+1)^{\frac12(d+1)}\Vol(A_1(1,1+\epsilon))e^{-\frac12C(d)(1+\epsilon)^2}.
$$

Now note that
$$
\Vol(1,1+\epsilon) \sim \epsilon^d.
$$
We thus have constants $C_1(d),C_2(d) > 0$ such that
$$
\PP(n \leq |S_{n^2}|_1 \leq (1+\epsilon) n) \geq C_1(d)\epsilon^de^{-C_2(d)(1+\epsilon)^2}
$$

Furthermore, we have
$$
|\{v \in \ZZ^d | n \leq |v|_1 \leq (1+\epsilon)n\}| \sim (\epsilon n)^d.
$$
Now, making use of the fact that $\PP(S_{n^2} = v)$ is decreasing in $|v|_1$ and $\PP(S_{n^2} = v) = \PP(S_{n^2} = w)$ whenever $|v|_1 = |w|_1$, we find from the above that
$$
\PP(S_{n^2} = v) \geq C_1(d)\frac{1}{(\epsilon n)^d}\epsilon^de^{-C_2(d)(1+\epsilon)^2} = C_1(d)n^{-d}e^{-C_2(d)(1+\epsilon)^2}
$$
whenever $|v|_1 = n$. This shows we can take
$$
c = C_1(d)e^{-C_2(d)(1+\epsilon)^2},
$$
which proves the first statement.

The second statement now follows from the observation that $\PP(S_n = v)$ is decreasing when $|v|_1$ is increasing. 
\end{proof}

Using concentration inequalities, we now show that if we start with a type 0 individual, then in generation $n$ all types $i$ at distance at most $\sqrt n$ from the origin form a positive fraction of the total size of generation $n$. We have the following proposition.

\begin{proposition}\label{prop:multitype_concentration}
Let $Z_n$ be a multitype branching process with types $I = \ZZ^d$. Assume the offspring distribution is multinomial with parameters $N$ and $p = (p^i)$, where $p^i = \frac1{2d+1}\sum_{|j-i|_1 \leq 1} E_j$. Suppose $M_n$ is such that $\PP(|Z_n| \geq M_n) > 0$. Then there exists a $\delta > 0$ such that for every integer $0 \leq r \leq \sqrt n$ and for $n$ large enough we have
\begin{multline}
\PP\left(Z_n(k) \geq \delta\frac{M_n}{n^{\frac d2}} \mbox{ for } |k|_1 \leq r\middle| |Z_n| \geq M_n \right)
\geq 1 - 2(2r+1)^d\exp\left(-\frac{\delta^2M_n}{8n^d}\right).
\end{multline}
\end{proposition}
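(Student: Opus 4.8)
The plan is to use the random walk representation of Proposition~\ref{prop:Branching_process_RW} to turn each coordinate $Z_n(k)$ into a sum of independent Bernoulli variables, and then combine a multiplicative Chernoff bound with the spreading estimate of Lemma~\ref{lemma:spreading_prob}. Since $Z_0 = E_0$, Proposition~\ref{prop:Branching_process_RW} tells us that $Z_n$ is distributed as $\sum_{i=1}^{|Z_n|} E_{S^i}$, where $S^1,S^2,\dots$ are independent copies of the walk $S_n$ of Lemma~\ref{lemma:spreading_prob} started at the origin and independent of $|Z_n|$. Writing $q_k := \PP(S_n = k)$, it follows that conditionally on $\{|Z_n| = m\}$ the variable $Z_n(k)$ has a $\Bin(m,q_k)$ distribution. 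I would therefore first fix $m$ and a type $k$ with $|k|_1 \le r \le \sqrt{n}$, prove the estimate for $\Bin(m,q_k)$, and only at the end take a union bound over the at most $(2r+1)^d$ types $k$ with $|k|_1 \le r$ and average over $m \ge M_n$.

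The crucial input is a \emph{uniform} lower bound $q_k \ge \delta_1 n^{-d/2}$, valid for all $|k|_1 \le \sqrt{n}$ and all large $n$. Lemma~\ref{lemma:spreading_prob} supplies a $\delta_0 > 0$ with $q_k \ge \delta_0 q_0$ for such $k$, and the monotonicity of $v \mapsto \PP(S_n = v)$ in $|v|_1$ used in that lemma gives $q_0 = \max_{v \in \ZZ^d} q_v$. Using that the $\ell_1$-ball of radius $\sqrt{n}$ in $\ZZ^d$ contains at most $C n^{d/2}$ points, one gets
\[
\PP\bigl(|S_n|_1 \le \sqrt{n}\bigr) \;=\; \sum_{|v|_1 \le \sqrt n} q_v \;\le\; C n^{d/2} q_0 .
\]
Since the covariance of the walk is the matrix $\Sigma$ computed in the proof of Lemma~\ref{lemma:spreading_prob}, the central limit theorem yields $\tfrac{1}{\sqrt m} S_m \Rightarrow \Nn(0,\Sigma)$, so the left-hand side tends to $\PP(|\Nn(0,\Sigma)|_1 \le 1) =: p_0 > 0$; hence $q_0 \ge p_0/(2Cn^{d/2})$ for $n$ large, and therefore $q_k \ge \delta_1 n^{-d/2}$ with $\delta_1 := \delta_0 p_0/(2C)$.

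It then remains to apply a Chernoff bound with parameter $\tfrac12$: for every $m \ge M_n$ and every $k$ with $|k|_1 \le r$,
\[
\PP\bigl(Z_n(k) \le \tfrac12 m q_k \;\big|\; |Z_n| = m\bigr) \;\le\; \exp\bigl(-\tfrac18 m q_k\bigr) \;\le\; \exp\bigl(-\tfrac18 \delta_1 M_n n^{-d/2}\bigr),
\]
using $m \ge M_n$ and $q_k \ge \delta_1 n^{-d/2}$. I would then choose $\delta := \min\{\delta_1/2, \sqrt{\delta_1}\}$, so that on the one hand $\delta M_n n^{-d/2} \le \tfrac12 M_n q_k \le \tfrac12 m q_k$, and on the other hand $\tfrac18 \delta_1 M_n n^{-d/2} \ge \delta^2 M_n/(8 n^d)$ (using $n \ge 1$). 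This gives $\PP\bigl(Z_n(k) < \delta M_n n^{-d/2} \mid |Z_n| = m\bigr) \le \exp\bigl(-\delta^2 M_n/(8 n^d)\bigr)$ for every $m \ge M_n$. Since $\PP(|Z_n| \ge M_n) > 0$ by assumption, the conditional law of $|Z_n|$ given $\{|Z_n| \ge M_n\}$ is supported on $\{m \ge M_n\}$, so averaging the bound against it preserves it; a union bound over the at most $(2r+1)^d \le 2(2r+1)^d$ types $k$ with $|k|_1 \le r$, followed by passing to complements, yields the statement.

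The binomial identification and the Chernoff step are routine. The step I expect to require the most care is the uniform lower bound $q_k \ge \delta_1 n^{-d/2}$ on the entire ball $\{|k|_1 \le \sqrt n\}$, because Lemma~\ref{lemma:spreading_prob} only compares $q_k$ to the return probability $q_0$, and one still has to bound $q_0$ itself from below — which is where the central limit theorem from the proof of that lemma is needed, rather than just the Kolmogorov-Rogozin upper bound. A secondary point to watch is keeping every estimate uniform in $m \ge M_n$, so that conditioning on the event $\{|Z_n| \ge M_n\}$, which may have small but positive probability, causes no difficulty.
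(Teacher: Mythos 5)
Your argument is correct and shares the paper's skeleton (the random-walk representation of Proposition \ref{prop:Branching_process_RW}, the spreading estimate of Lemma \ref{lemma:spreading_prob}, a CLT-based lower bound on the return probability, then concentration plus a union bound over the at most $(2r+1)^d$ types), but it replaces the paper's concentration step by a different tool. The paper freezes exactly $M_n$ of the independent walks, views the truncated occupation vector $\overline\Ss_n$ as a function of $M_n$ independent inputs with bounded differences, and applies the vector-valued McDiarmid inequality of Theorem \ref{theorem:McDiarmid_vector} (the sole purpose of the appendix), which controls all coordinates in $I_r^d$ at once with deviation scale $\sqrt{M_n}$ and yields the exponent $\delta^2 M_n/(8n^d)$. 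You instead exploit that, given $|Z_n|=m$, each coordinate is exactly $\Bin(m,q_k)$ and apply a multiplicative Chernoff bound coordinatewise; because this uses the smallness of $q_k$ (the variance) rather than only bounded differences, you obtain the stronger exponent of order $M_n n^{-d/2}$, which you then deliberately weaken to match the stated bound, and you bypass the appendix entirely. You also spell out the lower bound $q_0\geq c n^{-d/2}$ (via $\PP(|S_n|_1\leq\sqrt n)\to\PP(|\Nn(0,\Sigma)|_1\leq 1)>0$ and $q_0=\max_v q_v$), which the paper asserts tersely from the CLT and the same maximality fact. Two shared caveats, not gaps relative to the paper: both proofs implicitly read Proposition \ref{prop:Branching_process_RW} as providing walks independent of $|Z_n|$ (you need this for the binomial identification, the paper needs it to compare the conditional law of $\Ss_n$ with the unconditional law of $\overline\Ss_n$), and both invoke the monotonicity of $v\mapsto\PP(S_n=v)$ in $|v|_1$ that appears inside the proof of Lemma \ref{lemma:spreading_prob} rather than in its statement. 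Your handling of the conditioning, uniformly in $m\geq M_n$ and then averaging, is sound.
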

\begin{proof}
Let $X_1,X_2,\ldots,X_{|Z_n|}$ be independent copies of the random walk $S_n$ in Lemma \ref{lemma:spreading_prob}. By Proposition \ref{prop:Branching_process_RW} we have that $Z_n$ is equal in distribution to
$$
\Ss_n = \sum_{i=1}^{|Z_n|} E_{X_i}.
$$

Now define
$$
\overline\Ss_n = \sum_{i=1}^{M_n} E_{X_i}.
$$

Note that $\Ss_n$ is larger in distribution than $\overline\Ss_n$. As a consequence, we find that
\begin{align*}
\MoveEqLeft
\PP\left(Z_n(k) \geq \delta\frac{M_n}{\sqrt n} \mbox{ for } |k|_1 \leq r\middle| |Z_n| \geq M_n \right)
\\
&=
\PP\left(\Ss_n(k) \geq \delta\frac{M_n}{\sqrt n} \mbox{ for } |k|_1 \leq r\middle| |Z_n| \geq M_n \right)
\\
&\geq
\PP\left(\overline\Ss_n(k) \geq \delta\frac{M_n}{\sqrt n} \mbox{ for } |k|_1 \leq r\right).
\end{align*}

Now define the function $F:\RR^{M_n} \to \RR^{\ZZ^d}$ given by
$$
F(x_1,\ldots,x_{M_n}) = \sum_{i=1}^{M_n} E_{x_i}
$$
Writing $I_r = [-r,r] \cap \ZZ$, we define from this the function $\tilde F_r:\RR^{M_n} \to \RR^{I_r^d}$ given by
$$
\tilde F_r(x_1,\ldots,x_{M_n}) = \sum_{|l|_1\leq r}F_l(x_1,\ldots,x_{M_n})E_l. 
$$
Here, $F_l$ denotes the $l$-th coordinate of $F$. 

If we change a variable $x_i$, then at most one entry in the image of $\tilde F_r$ is increased by 1 while at most one other is decreased by 1. Therefore, we can apply Theorem \ref{theorem:McDiarmid_vector} with $c_i = 1$ for all $i \in I_r^d$. Since $|I_r^d| = (2r+1)^d$, this gives us that
$$
\PP\left(||\overline\Ss_n - \EE(\overline\Ss_n)||_{L^\infty(I_r^d)} \geq \epsilon\right) \leq 2(2r+1)^d\exp\left(-\frac{\epsilon^2}{2M_n}\right)
$$
for every $\epsilon > 0$.

Now we can compute $\EE(\overline\Ss_n(k)) = M_n\PP(S_n = k)$. Therefore, by Lemma \ref{lemma:spreading_prob} there exists a $\delta > 0$ such that
$$
\EE(\overline\Ss_n(k)) \geq \delta M_n\PP(S_n = 0).
$$
for $n$ large enough and $|k|_1 \leq \sqrt n$. By then central limit theorem and the fact that $\PP(S_n = k)$ is largest when $k = 0$, we find that
$$
\PP(S_n = 0) \geq cn^{-\frac d2}
$$
for some $c > 0$. Therefore, by shrinking $\delta$ sufficiently, we find that
$$
\EE(\overline\Ss_n(k)) \geq \delta\frac{M_n}{n^{\frac d2}}.
$$

Now 
\begin{align*}
\MoveEqLeft
\PP\left(\overline\Ss_n(k) \geq \frac{\delta}{2}\frac{M_n}{n^{\frac d2}} \mbox{ for } |k|_1 \leq r\right)
\\
&\geq 
\PP\left(\left|\overline\Ss_n(k) - \EE(\overline\Ss_n(k))\right| \leq \frac{\delta}{2}\frac{M_n}{n^{\frac d2}} \mbox{ for } |k|_1 \leq r \right)
\\
&\geq
\PP\left(||\overline\Ss_n - \EE(\overline\Ss_n)||_{L^\infty(I_r^d)} \leq \frac{\delta}{2}\frac{M_n}{n^{\frac d2}}\right)
\\
&=
1 - \PP\left(||\overline\Ss_n - \EE(\overline\Ss_n)||_{L^\infty(I_r^d)} > \frac{\delta}{2}\frac{M_n}{n^{\frac d2}}\right)
\\
&\geq
1 - 2(2r+1)\exp\left(-\frac{\delta^2M_n}{8n^d}\right)
\end{align*}
which concludes the proof.
\end{proof}




\section{Proof of Theorem \ref{theorem:giant_component}} \label{section:proof}

In this section we prove Theorem \ref{theorem:giant_component}. The proof relies on a similar idea as the proof in \cite{Dur07} for the standard configuration model. To study the components of the random graph $G_n$ from the compartment model, we use an exploration process. However, a major complication that arises in our case is that we can only use this to find large components locally, because the compartment structure restricts the neighbours of vertices we are exploring. Since the total number of compartments $k(n)$ diverges, these large components become `more and more local'.

The main idea is now to show that many (i.e. tending to infinity) such locally-large components together form one giant component. However, this means that we have to prove that a diverging number of such local components exist simultaneously with high probability. Therefore, we need to obtain precise quantitative bounds on the probability that the exploration process finds a sufficiently large (local) component. 

To study this exploration process, we connect it to a branching process. In particular, we also need to track how the explored component spreads through the different compartments. For this, our idea is to introduce types, where each type represents a component. We then connect the exploration process through the compartments to a multitype branching process. A similar argument was recently used in \cite{Siv14} to study site-percolation on the $d$-dimensional Hamming torus. There, the types represent the $d$ directions in which exploration can take place. We use the types in a completely different way, with the multitype branching process tracking 'higher-order' structure of the graph, that is its compartmental structure.
This representation allows us to show that all locally-large components connect together with high probability to form one large component that spreads through all compartments of the $d$-dimensional torus. Finally, the proof is concluded by showing that this large component is actually a giant component by determining its size.

\subsection{Exploration process} \label{subsection:exploration}

In order to study the growth of components in the graph $G_n = (V_n,E_n)$ of the compartment model, we will explore them iteratively. To do this, we start at a vertex $v \in V_n$ and reveal its neighbours. After that, we consider each of these newly revealed vertices and reveal their neighbours, and so on. In particular, in the exploration, we keep track of the compartment to which each vertex belongs. Let us now define this process rigorously.\\

Recall that the graph $G_n = (V_n,E_n)$ consists of $k(n)^d$ compartments $\{C_i^n\}_{i \in [k(n)]^d}$, each containing $m(n)$ vertices. Let now $v \in C_j^n$ be some vertex in the graph $G_n$. The \emph{exploration process} started at $v$ is a sequence of tuples $(R_l,A_l,U_l)$ of hypermatrices of dimension $k(n)^d$ constructed recursively. Here $R_l(i)$ denotes the set of explored vertices in compartment $C_i^n$, $A_l(i)$ the set of active vertices in compartment $C_i^n$, i.e., those that we have already revealed, but not yet explored, and $U_l(i)$ are the other, yet unseen vertices in compartment $C_i^n$. We initialize the process by setting $R_0(i) = \emptyset$ for all $i$, $A_0(j) = \{v\}$ and $A_0(i) = \emptyset$ for all $i \neq j$ and $U_0(j) = C_j^n - \{v\}$ and $U_0(i) = C_i^n$ for $i \neq j$. Now, at every iteration, we define $A_{l+1}$ to be all neighbours of vertices in $A_l$ which are in $U_l$. We then set $R_{l+1} = R_l \cup A_l$ and $U_{l+1} = U_l \setminus A_{l+1}$, where the set operations have to be interpreted element-wise.

A somewhat related exploration process  was used in  \cite{Siv14}. However, there the types are assigned while the exploration is running, while in our case, the types are known beforehand. Furthermore, in \cite{Siv14}, one vertex is explored at a time, while we consider an entire generation at once.

Using this exploration process, we want to analyse how large the component we explore grows. In order to do this, we need to find a lower bound on the size of the active set $A_l$. To this end, we introduce the following notation:
$$
|A_l| = (|A_l(i)|)_{i\in[k(n)]^d}
$$
and 
$$
||A_l|| = \sum_{i \in [k(n)]^d} |A_l(i)|.
$$

In the upcoming proposition we prove that we can use a multitype branching process as pointwise stochastic lower bound for $|A_l|$. For real-valued random variables $X$ and $Y$ we say that $X$ is a lower bound for $Y$ if for all $a \in \RR$ we have $\PP(X \geq a) \leq \PP(Y \geq a)$. Furthermore, we call a sequence $r = (r_k)_{k \geq 0}$ a \emph{distribution} if $\sum_{k=0}^\infty r_k = 1$ and $r_k \geq 0$ for all $k$. For every such sequence, we let $W_r:(0,1) \to \NN$ be a non-decreasing function satisfying
$$
|\{\omega|W_r(\omega) = k\}| = r_k.
$$
It follows that if we remove mass $\eta$ from the distribution $r$ and normalize, then this will be stochastically larger than $W_r^\eta = (W_r(\omega)|\omega < 1 - \eta)$. Indeed, the latter removes mass $\eta$, starting from the largest values of $W_r$. Using all this, we can state and prove the following proposition.

\begin{proposition}\label{prop:exploration_lower_bound_vector}
Let the assumptions of Theorem \ref{theorem:giant_component} be satisfied. Let $i \in [k(n)]^d$ and let $v \in C_i^n$ and denote by $(R_l,A_l,U_l)$ the exploration process started at $v$. Let $\eta \in (0,1)$ and assume that at most $\eta m(n)$ vertices of each compartment have already been exposed. Then for every $\delta > 0$ there exist a multitype branching process $(S_l)_l$, such that until $\delta m(n)$ vertices in at least one compartment have been exposed, we have that $|A_l|$ is stochastically bounded from below by $S_l$.

Moreover, the offspring distribution of $(S_l)_l$ can be chosen to be multinomial with parameters $N$ and $p = (p^i)$ where $p^i = \frac1{2d+1}\sum_{|j-i|_1\leq 1} E_j$. Furthermore, for $\delta$ small enough, $N$ can be chosen such that $\EE(N) > 1$.
\end{proposition}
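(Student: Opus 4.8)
The plan is to reveal the configuration-model matching one half-edge at a time, processing the active set generation by generation, and to build along the way a coupling with a multitype Galton--Watson process $(S_l)_l$ started from $S_0=E_i$ so that $S_l(j)\le |A_l(j)|$ holds for every $j\in[k(n)]^d$ and every $l$, up to the first time that $\delta m(n)$ further vertices have been exposed in some compartment. The single structural fact I would use about the model is that, whenever an unmatched half-edge attached to an active vertex $u\in C_i^n$ is paired off, its partner is uniform among all currently unmatched half-edges lying in $C_i^n$ or one of its $2d$ lattice neighbours.

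The heart of the argument is a uniform estimate on the pool of unmatched half-edges that stays valid throughout the exploration. As long as every compartment holds at most $(\eta+\delta)m(n)$ exposed vertices, the number of half-edges incident to exposed vertices in any single compartment is at most $m(n)\phi(\eta+\delta)$, where $\phi(\epsilon)\to 0$ as $\epsilon\to 0$; here one uses Assumption~\ref{assumption:convergent_deg_seq}(1) together with $\EE(e^{tD})<\infty$ (which forces $\EE(D^2)<\infty$, and in particular controls the sum of the largest degrees in a compartment). Combined with Assumption~\ref{assumption:convergent_deg_seq}(2), giving $(1+o(1))\EE(D)m(n)$ half-edges per compartment, this shows that for $n$ large the numbers of \emph{unmatched half-edges attached to as-yet-unseen vertices} in the $2d+1$ relevant compartments are all equal up to a multiplicative factor $1\pm\psi$, with $\psi=\psi(\eta,\delta,n)$ that can be made arbitrarily small by first taking $n\to\infty$ and then $\eta,\delta\to 0$. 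Consequently each pairing can be coupled with an independent triple $(B,J,L)$: $B\sim\mathrm{Bernoulli}(\theta)$ with $\theta\ge 1-\psi$ records that the partner belongs to a previously unseen vertex; conditionally on $B=1$, $J$ is uniform over $\{j:|j-i|_1\le 1\}$ and gives that vertex's compartment; and $L$ is dominated by the actual degree of the newly discovered vertex. The law $\nu$ of $L$ I construct by truncating the empirical size-biased degree distribution of the unseen vertices at a level $R$ and dumping the defect onto the atom at $1$; one checks directly that $\nu$ is then a stochastic lower bound and that, since the empirical size-biased weights converge to $\PP(D^*=r)$ for each fixed $r$ by Assumption~\ref{assumption:convergent_deg_seq}, $\EE_\nu(L)\to\EE(D^*)$ as $R\to\infty$ (using $\EE(D^*)<\infty$), $n\to\infty$, $\eta,\delta\to 0$. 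All matches to already-seen vertices, self-loops and multi-edges are simply discarded; as they can only decrease $|A_l|$ this is harmless.

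Iterating this coupling generation by generation yields the branching process: a non-root individual discovered with degree $L\sim\nu$ contributes, for each of its $L-1$ remaining half-edges, an independent child with probability $\ge\theta$, that child's type being uniform over the neighbours of its compartment. Hence every non-root individual has offspring distributed as $\Mult(N,p)$ with $N:=\Bin(L-1,\theta)$, $L\sim\nu$, and $p^i=\tfrac1{2d+1}\sum_{|j-i|_1\le 1}E_j$ \emph{exactly} — the discrepancies between compartments having been absorbed into $B$; the root is covered by the same formula upon setting $L=d(v)+1$ deterministically. Finally $\EE(N)=\theta\,(\EE_\nu(L)-1)$, and since $\theta\to 1$ and $\EE_\nu(L)\to\EE(D^*)$ in the limit described above, $\EE(N)\to\EE(D^*)-1$. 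The hypothesis $\EE(D(D-2))>0$ is precisely $\EE(D^*)>2$, so $\EE(D^*)-1>1$, and therefore $\EE(N)>1$ once $\delta$ (and, if needed, the pre-exposure level $\eta$) is small enough and $n$ large.

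\textbf{Main obstacle.} I expect the delicate point to be the uniform half-edge estimate: controlling, simultaneously over all compartments and all exploration histories up to the stopping time, the number of unmatched half-edges attached to unseen vertices — including those half-edges of a compartment that get matched into it from its neighbours — and condensing this into the two clean objects $\theta$ and $\nu$ on which the whole coupling rests. Once that is in place, the branching structure and the computation of $\EE(N)$ are routine.
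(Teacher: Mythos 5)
Your overall strategy coincides with the paper's: dominate each exploration step from below by a multinomial offspring law whose number of trials is a thinned, truncated version of the size-biased degree minus one, assign types uniformly over the $2d+1$ admissible compartments, and deduce $\EE(N)>1$ for small $\delta,\eta$ from $\EE(D(D-2))>0\iff\EE(D^*)>2$. Your device of absorbing the small discrepancies between compartments into the Bernoulli parameter $\theta$ is, if anything, more careful than the paper's bare appeal to symmetry. However, there is one concrete gap in the domination argument: the sentence ``all matches to already-seen vertices \ldots are simply discarded; as they can only decrease $|A_l|$ this is harmless'' is exactly backwards for a \emph{lower} bound. If a half-edge of the vertex currently being processed is matched to another \emph{active} vertex $w$, this not only fails to produce a child (which your $B=0$ case covers) but also consumes one of $w$'s remaining half-edges; when $w$ is processed later it performs strictly fewer than $L-1$ pairings, so its offspring need not stochastically dominate $\Bin(L-1,\theta)$ when $\theta$ only accounts for the probability that a pairing \emph{initiated from $w$'s side} finds an unseen partner in a uniformly chosen neighbouring compartment. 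As stated, your per-pairing coupling therefore does not justify the claim that every non-root individual has offspring $\Mult(N,p)$ with $N=\Bin(L-1,\theta)$.

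The gap is repairable with the uniform pool estimate you already set up: up to the stopping time, the number of half-edges attached to seen vertices in the $2d+1$ relevant compartments is $o(m(n))$ while the number of admissible unmatched half-edges is of order $m(n)$, so the probability that a \emph{fixed} half-edge of $w$ is consumed ``from the other side'' before $w$ is processed is also bounded by a quantity that vanishes as $n\to\infty$ and $\eta,\delta\to0$; folding this into $\theta$ (i.e.\ defining the per-half-edge success event as ``not consumed by another active vertex, and the eventual partner is unseen and lies in a uniformly chosen neighbouring compartment'') restores the domination. The paper handles the same issue differently, by subtracting \emph{two} per collision in the construction $X_j\sim \bar W_{q,j}^{2\eta}-2\sum_{|i-j|_1\le1}\Bin(\bar W_{q,j}^{2\eta}(i),\gamma_\delta)E_i$: one unit for the lost child and one to pay in advance for the half-edge destroyed at the active vertex that was hit. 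Either fix is fine, but some such accounting must appear explicitly; without it the step fails.
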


\begin{proof}
We argue the existence by constructing a suitable multitype branching process. To this end, we first argue what happens when exploring a single vertex $w \in C_j^n$. Assume that at most $\eta m(n)$ vertices have been exposed in every compartment. Let $D^*$ be the size-biased degree distribution (see Remark \ref{remark:size_biased}) and set $Z_D = D^* - 1$. We define the distribution $q = (q_k)$ by $q_k = \PP(Z_D = k)$. Since at most a fraction $\eta$ of the vertices has been exposed, together with the fact that $d_n$ converges to $D$ as in Assumption \ref{assumption:convergent_deg_seq}, it follows that for $n$ large enough the amount of new neighbours found while exploring $w$ is bounded from below by $W_q^{2\eta}$. 

By symmetry, these $W_q^{2\eta}$ new vertices are equally likely to be in any of the neighbouring compartments of $w$, i.e., in the compartments $C_i^n$ with $|i-j|_1 \leq 1$. Therefore, we consider the random variables
$$
\bar{W}_{q,j}^{2\eta} \sim \Mult\left(W_q^{2\eta},\frac1{2d+1}\sum_{|i-j|_1 \leq 1} E_i\right), 
$$
which is a multinomial distribution. Here, $E_i(x) = \delta_{ix}$ for $x \in [k(n)]^d$.\\

Finally, we need to take into account that the new vertices may already have been exposed before. Because of the degree constraints, we remove these vertices from the active set. Note that $\bar W_{q,j}^{2\eta}(i)$ takes values in the set $\{0,1,\ldots,L\}$, where $L = W_q(1 - 2\eta)$. Therefore, if at most $\delta m(n)$ vertices have been exposed from any compartment $C_i^n$, then there are at most $L\delta m(n)$ possible half-edges connected to active vertices in $C_i^n$. On the other hand, there are at least $\mu_n(d_n,C_i^n) - L\delta m(n)$ half-edges left which are not connected to an active vertex. Therefore, the probability of choosing an active neighbour in that compartment is at most
$$
\frac{L\delta m(n)}{\mu_n(d_n,C_i^n) - L\delta m(n)}.
$$
From Assumption \ref{assumption:convergent_deg_seq} it follows that $\frac{\mu_n(d_n,C_i^n)}{m(n)}$ converges to $\frac12\EE(D)$ uniformly over the compartments. Therefore, given $\epsilon > 0$, for $n$ large enough the above is smaller than
$$
\gamma_\delta := \frac{L\delta}{\frac12\EE(D) - \epsilon - L\delta}
$$
provided $\delta > 0$ is small enough.

Collecting everything, we see that the number of new vertices found while exploring $w \in C_j^n$ is bounded from below by
$$
X_j \sim \bar W_{q,j}^{2\eta} - 2\sum_{|i-j|_1 \leq 1}\Bin(|\bar W_{q,j}^{2\eta}(i)|,\gamma_\delta)E_i.
$$
From this we can conclude that $X_j$ follows a multinomial distribution with parameters $N$ and $p^j = \frac1{2d+1}\sum_{|i-j|_1\leq 1} E_i$. In particular, for $N$ we have 
$$
N \sim W_q^{2\eta} - 2\Bin(W_q^{2\eta},\gamma_\delta).
$$

From this it follows that 
$$
\EE(N) = \EE(W_q^{2\eta})(1 - 2\gamma_\delta).
$$

Now note that $\lim_{\eta \to 0} \EE(W_q^{2\eta}) = \EE(W_q) = \EE(Z_D)$. Since by assumption $\EE(Z_D) > 1$ (see Remark \ref{remark:size_biased}), we find that for $\eta$ small enough we have $\EE(W_q^{2\eta}) > 1$. Furthermore, note that $\gamma_\delta$ tends to 0 as $\delta$ tends to 0. Combining the above, we find that we can choose $\delta$ and $\eta$ small enough so that $\EE(N) > 1$.
\end{proof}

Our next aim is to prove that if $||A_l||$ grows to size $\beta \log m(n)$, then it actually grows to size $m(n)^{\frac23}$ with high probability. For this, we will use the lower bound we found in Proposition \ref{prop:exploration_lower_bound_vector}. Before we can show this, we first need a lemma.

\begin{lemma}\label{lemma:bounding_stopping_time}
Let $X$ be a random variable such that $X \geq -1$, $\PP(X = -1) > 0$ and $\EE(X) > 0$. Define $S_n$ by
$$
S_n = S_0 + \sum_{i=1}^n Y_i,
$$
where $Y_i = \sum_{j=1}^{S_{i-1}} X_j$ with $X_1,\ldots X_{S_{i-1}}$ i.i.d. with distribution $X$. Suppose $S_0 = x > 0$ and define
$$
T(x) := \inf\{n||S_n| = 0\}. 
$$
Then there exists a $\lambda > 0$ such that
$$
\PP(T(x) < \infty) \leq e^{-\lambda x}.
$$
\end{lemma}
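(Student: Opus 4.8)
The statement asserts a geometric tail bound on the extinction time $T(x)$ for a Galton-Watson-type process with offspring distribution $X \geq -1$ having positive mean. My plan is to reduce this to a statement about a random walk via a martingale/change-of-measure argument, exploiting that $\EE(X) > 0$ makes extinction an exponentially unlikely event as the starting value $x$ grows. The cleanest route: note that $S_n$ is a branching process with immigration-free offspring, but more useful here is the observation that on the event $\{T(x) > n\}$ the process stays positive, and we can dominate the probability of ever hitting $0$ by a union/submultiplicativity argument over independent ``sub-populations''.

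\textbf{Step 1: Reduce to the single-ancestor case.} Because the $X_j$ in the definition of $Y_i$ are i.i.d.\ and the evolution is that of a Galton-Watson branching process (with $X+1 \geq 0$ the number of offspring), starting from $S_0 = x$ is the same in distribution as running $x$ independent copies of the process started from $1$, and $T(x) < \infty$ requires \emph{all} $x$ copies to go extinct. Hence
$$
\PP(T(x) < \infty) = \PP(T(1) < \infty)^x = q^x,
$$
where $q := \PP(T(1) < \infty)$ is the extinction probability of the branching process with offspring distribution $X + 1$. Setting $\lambda := -\log q > 0$ gives the claim, \emph{provided} $q < 1$.

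\textbf{Step 2: Show $q < 1$.} The offspring distribution is $X+1$, which has mean $\EE(X+1) = 1 + \EE(X) > 1$ by hypothesis, so the branching process is supercritical; by the standard Galton-Watson dichotomy (recalled in Section \ref{section:branching}) its extinction probability is the unique fixed point of the generating function in $[0,1)$, hence strictly less than $1$. The hypotheses $X \geq -1$ and $\PP(X = -1) > 0$ guarantee the offspring distribution is genuinely supported on $\{0, 1, 2, \dots\}$ with $\PP(X+1 = 0) > 0$, so the process is non-degenerate and $q > 0$ as well (though only $q<1$ is needed). This yields $\lambda = -\log q \in (0,\infty)$ and finishes the proof.

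\textbf{Main obstacle.} The only real subtlety is making Step 1 rigorous: one must check that the recursive definition of $S_n$ via $Y_i = \sum_{j=1}^{S_{i-1}} X_j$ with fresh i.i.d.\ samples at each generation genuinely produces the Galton-Watson tree with offspring law $X+1$ (so that the ``$x$ independent copies'' decomposition is valid and $T(x)$ is exactly the generation at which the total population first vanishes). This is a standard but load-bearing identification; once it is in place, everything else is an invocation of the elementary theory of supercritical Galton-Watson processes. A cosmetic point: the definition writes $T(x) = \inf\{n : |S_n| = 0\}$, and since $S_n \geq 0$ always, $|S_n| = S_n$, and $S_n = 0$ is absorbing, so $T(x) < \infty$ is exactly the extinction event — no issue there.
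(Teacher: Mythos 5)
Your proof is correct, but it takes a genuinely different route from the paper. You identify $S_n$ as a Galton--Watson process with offspring law $X+1$, invoke the branching property to get the exact identity $\PP(T(x)<\infty)=q^x$ with $q$ the single-ancestor extinction probability, and conclude from supercriticality ($\EE(X+1)>1$) that $q<1$, so $\lambda=-\log q$ works; the load-bearing identification you flag is indeed fine, since $S_n=\sum_{j=1}^{S_{n-1}}(1+X_j)$ with fresh i.i.d.\ samples is exactly the GW recursion and $0$ is absorbing. The paper instead argues via an exponential martingale: since $X\geq -1$ and $\PP(X=-1)>0$, the moment generating function $M(t)=\EE(e^{tX})$ satisfies $M(0)=1$, $M'(0)=\EE(X)>0$ and $M(t)\to\infty$ as $t\to-\infty$, so there is a root $\lambda>0$ with $M(-\lambda)=1$; then $e^{-\lambda S_n}$ is a martingale and optional stopping gives $\PP(T(x)<\infty)\leq e^{-\lambda x}$. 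Your argument is more elementary, yields equality rather than just an upper bound, and does not actually use the hypothesis $\PP(X=-1)>0$ (if $q=0$ any $\lambda$ works), whereas that hypothesis is exactly what the paper needs to guarantee the root of $M$. What the paper's approach buys is the specific constant: the $\lambda$ with $M(-\lambda)=1$ is reused in Proposition \ref{prop:component_bound_hitting_time}, where the Chernoff bound picks $\lambda'\in(0,\lambda)$ using precisely the property $M(-\lambda)=1$; with your $\lambda=-\log q$ that property is not available, so the downstream argument would have to introduce the MGF root separately (a harmless but necessary adjustment). One cosmetic caveat in your write-up: if $\EE(X)=\infty$ the textbook supercriticality statement should be justified by truncating $X$ from above (as the paper does), which only increases the extinction probability.
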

\begin{proof}
It suffices to prove the statement for $X$ bounded from above, since this only increases $T(x)$. Let $M(t) = \EE(e^{tX})$ be the moment generating function of $X$. Since $X$ is bounded from below, we have that $M(t)$ is defined for all $t \leq 0$. Note that $M(0) = 1$, $M'(0) = \EE(X) > 0$ and
$$
\lim_{t \to -\infty} M(t) \geq \lim_{t\to-\infty} \PP(X = -1) e^{-t} = \infty.
$$
From this, together with the continuity of $M(t)$, it follows that there exists a $\lambda > 0$ such that $M(-\lambda) = 1$. This implies that $Z_n = e^{-\lambda S_n}$ is a martingale. 

From the optional stopping theorem, we find that
$$
\EE\left(Z_{T(x)}\right) = \lim_{n\to\infty} \EE\left(Z_{n\wedge T(x)}\right) = \EE(Z_0) = e^{-\lambda x}.
$$
On the other hand, 
$$
\EE\left(Z_{T(x)}\right) \geq \PP(T < \infty),
$$
and hence we find that $\PP(T(x) < \infty) \leq e^{-\lambda x}$.
\end{proof}

Using this lemma, we can show that if the active set of the exploration process grows to size $\beta \log m(n)$, then the probability that the exploration process does not explore a large cluster is small. More precisely, we have the following proposition.

\begin{proposition}\label{prop:component_bound_hitting_time}
Let the assumptions in Proposition \ref{prop:exploration_lower_bound_vector} be satisfied. Suppose $\sum_{i=0}^l ||A_i|| \geq \beta \log m(n)$ for some $l$. Define
$$
T := \inf\{n|||A_n|| = 0\}.
$$
Then for $\beta$ large enough we have
$$
\PP(T < \infty) \leq 2m(n)^{-k}.
$$
\end{proposition}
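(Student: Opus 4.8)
The plan is to compare the size process $(\|A_l\|)_l$ of the exploration process against a one-dimensional random walk obtained by collapsing the multitype branching lower bound of Proposition~\ref{prop:exploration_lower_bound_vector} onto its total mass, and then invoke Lemma~\ref{lemma:bounding_stopping_time}. First I would apply Proposition~\ref{prop:exploration_lower_bound_vector}: as long as fewer than $\delta m(n)$ vertices have been exposed in every compartment, $|A_l|$ dominates a multitype branching process $(S_l)_l$ with multinomial offspring with parameters $N$ and $p^j$, where $\EE(N)>1$ for $\delta,\eta$ chosen small enough. Summing over types, $\|A_l\|$ dominates $\|S_l\| = \sum_i S_l(i)$, which is exactly a Galton–Watson-type size process: given $\|S_{l}\|$, the next total is $\sum_{j=1}^{\|S_l\|} N_j$ with $N_j$ i.i.d.\ copies of $N$. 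Writing $X = N - 1$, this size process has the form required by Lemma~\ref{lemma:bounding_stopping_time} — indeed $X \geq -1$ since the offspring count $N$ is a nonnegative integer (it may be $0$ with positive probability, as $W_q^{2\eta}$ can vanish, so $\PP(X=-1)>0$), and $\EE(X) = \EE(N)-1 > 0$. Starting the size process at $S_0 = \|A_l\| \geq \beta\log m(n)$ (using the hypothesis $\sum_{i\le l}\|A_i\|\geq\beta\log m(n)$, and, if necessary, waiting for the first index $l'$ at which $\|A_{l'}\|$ itself is large, or else observing the component is already large), Lemma~\ref{lemma:bounding_stopping_time} gives a $\lambda>0$ with
$$
\PP\big(\tilde T < \infty\big) \leq e^{-\lambda \beta \log m(n)} = m(n)^{-\lambda\beta},
$$
where $\tilde T$ is the extinction time of the dominating size process. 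Choosing $\beta \geq (k+1)/\lambda$ makes this at most $m(n)^{-k-1}$.

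The one genuine subtlety — and the step I expect to be the main obstacle — is that the stochastic domination $\|A_l\| \succeq \|S_l\|$ is only valid while the exploration has exposed fewer than $\delta m(n)$ vertices in every compartment; once that threshold is crossed in some compartment, Proposition~\ref{prop:exploration_lower_bound_vector} no longer applies. So the event $\{T < \infty\}$ must be split: either the exploration dies out ($\|A_n\|$ hits $0$) \emph{before} the $\delta m(n)$-threshold is reached in any compartment, in which case the coupling with $(S_l)_l$ is in force throughout and the bound above controls this; or the exploration reaches $\delta m(n)$ exposed vertices in some compartment first, in which case it has already discovered a component of size at least $\delta m(n) \gg m(n)^{2/3}$, i.e.\ a large cluster, and $T$ being finite afterward is irrelevant to the conclusion we actually want (which, in the next proposition, is that a large component is found). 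To handle this cleanly in the statement as written — a bound on $\PP(T<\infty)$ — I would argue that on the second event the exploration has in particular found $\delta m(n) > 0$ active-or-explored vertices, so the relevant bad event is contained in $\{\tilde T<\infty\}$ up to the stopping time $\tau_\delta$ at which the threshold is hit, and a standard truncation/optional-stopping argument (applying the martingale $e^{-\lambda S_{n\wedge \tau_\delta}}$ of Lemma~\ref{lemma:bounding_stopping_time} with the extra stopping at $\tau_\delta$) shows that extinction before $\tau_\delta$ still has probability at most $e^{-\lambda\beta\log m(n)}$. Either way one obtains a contribution bounded by $m(n)^{-k-1}$.

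Finally I would assemble the pieces: the probability that the exploration started from the given configuration dies out (within the region of validity of the coupling) is at most $m(n)^{-k-1} \le m(n)^{-k}$ for $n$ large, and — if one also wishes to account for the possibility that the first $l$ generations, rather than a single later generation, carry the $\beta\log m(n)$ mass — a union bound over the at most $l \leq \beta\log m(n)$ relevant generations (or simply restarting the size-process comparison from whichever generation first accumulates mass $\beta\log m(n)/2$) costs only an extra polylogarithmic factor, which is absorbed by enlarging $\beta$, yielding $\PP(T<\infty) \leq 2m(n)^{-k}$ as claimed. The only place where care with constants is needed is ensuring $\lambda$ (which comes from $M(-\lambda)=1$ for $M(t)=\EE(e^{tX})$ and depends only on the law of $N$, hence only on $D$, $\delta$, $\eta$) is fixed \emph{before} $\beta$ is chosen, so that $\beta$ can be taken large enough in terms of $\lambda$ and $k$; this is legitimate since $\delta$ and $\eta$ were already fixed in Proposition~\ref{prop:exploration_lower_bound_vector} independently of $\beta$.
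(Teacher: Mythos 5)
Your setup---dominating $\|A_l\|$ by the total size $|S_l|$ of the multitype branching process from Proposition~\ref{prop:exploration_lower_bound_vector}, passing to $X=N-1$, and invoking Lemma~\ref{lemma:bounding_stopping_time}---is the same as the paper's, and your caveat about the domination only being valid before the $\delta m(n)$-threshold is crossed is a legitimate point. But there is a genuine gap at the central step: you apply Lemma~\ref{lemma:bounding_stopping_time} started from ``$S_0=\|A_l\|\geq\beta\log m(n)$'', whereas the hypothesis only gives $\sum_{i=0}^{l}\|A_i\|\geq\beta\log m(n)$, i.e.\ a bound on the \emph{cumulative} active mass, not on any single generation. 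It is entirely possible that every individual generation stays small (bounded, say) while the cumulative sum reaches $\beta\log m(n)$; then there is no index $l'$ with $\|A_{l'}\|$ large, so your fallback of ``waiting for the first index at which $\|A_{l'}\|$ itself is large'' has nothing to wait for, and Lemma~\ref{lemma:bounding_stopping_time} started from a bounded value yields only a constant bound, not $m(n)^{-k}$. Your other fallback---that in this case ``the component is already large, so $T<\infty$ is irrelevant''---also fails: the proposition is precisely a bound on the probability that a component which has already exposed $\beta\log m(n)$ vertices nevertheless stops growing (it is used in Proposition~\ref{prop:giant_component_discription} to exclude clusters of intermediate size between $\beta\log m(n)$ and $m(n)^{\frac23}$), so that event is exactly what must be controlled, not dismissed. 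Likewise, ``restarting from whichever generation first accumulates mass $\beta\log m(n)/2$'' presupposes the existence of a large generation, which is the very thing that needs to be established.

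The paper supplies the missing step by telescoping: $|S_{l+1}|=\sum_{j=1}^{\Sigma_l}X_j$ with $\Sigma_l=\sum_{i=0}^{l}|S_i|$ and $X_j$ i.i.d.\ distributed as $N-1$, so on the event $\Sigma_l\geq\beta\log m(n)$ a Chernoff bound (using $\EE(N-1)>0$ and the root $M(-\lambda)=1$ from Lemma~\ref{lemma:bounding_stopping_time}) shows that generation $l+1$ has size at least $\frac{k}{\lambda}\log m(n)$ except with probability $m(n)^{-k}$, after choosing $\beta$ large in terms of $\lambda$, $\lambda'$ and $k$. Only then is Lemma~\ref{lemma:bounding_stopping_time} applied, from this single large generation, contributing another $m(n)^{-k}$ and hence the stated $2m(n)^{-k}$. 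Without an argument of this kind converting cumulative mass into one large generation, your proof does not go through; the rest of your outline (the choice of $\lambda$ before $\beta$, and the truncation at the $\delta m(n)$-threshold) is fine.
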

\begin{proof}
Let $S_n$ be the lower bound for $|A_n|$ from Proposition \ref{prop:exploration_lower_bound_vector}. Then 
$$
|S_n| = \sum_{i \in [k(n)]^d} S_n(i) 
$$ 
is a lower bound for $||A_n||$, and in particular,
$$
\Sigma_l = \sum_{i=0}^l |S_i|
$$
is a lower bound for $\sum_{i=0}^l ||A_i||$.

Now assume that $\Sigma_l \geq \beta\log m(n)$ and define 
$$
\tilde T = \inf\{n||S_n| = 0\}.
$$
Then $\PP(T < \infty) \leq \PP(\tilde T < \infty)$. 

Note that we can write
$$
|S_{l+1}| = \sum_{i=1}^{|S_l|} \tilde X_i,
$$
where $\tilde X_1,\ldots,\tilde X_{|S_l|}$ are independent and distributed like $N$ as in Proposition \ref{prop:exploration_lower_bound_vector}. By telescoping, this implies that
$$
|S_{l+1}| = \sum_{j=1}^{\Sigma_l} X_j,
$$
where $X_1,\ldots,X_{\Sigma_l}$ are independent and equal in distribution to $N - 1$. 

Let $\lambda$ be as in Lemma \ref{lemma:bounding_stopping_time} for the random variable $X_1$. From Chernoff's bound it follows that
\begin{align*}
\PP\left(\sum_{j=1}^\sigma X_j \leq \frac{k}{\lambda}\log m(n)\right)
&\leq 
e^{\theta\frac{k}{\lambda}\log m(n)}\EE(e^{-\theta X_1})^\sigma
\\
&=
\exp\left(\theta\frac{k}{\lambda}\log m(n) + \sigma\log M(-\theta)\right)
\end{align*}
for all $\theta > 0$, where $M(t) = \EE(e^{t X_1})$. Since $M(0) = 1, M(-\lambda) = 1$ and $M'(0) = \EE(X_1) > 0$, there exists a $\lambda' \in (0,\lambda)$ such that $M(-\lambda') < 1$. In particular, this implies that $\log M(-\lambda') < 0$. From this it follows that for $\sigma \geq \beta\log m(n)$ we have 
$$
\PP\left(\sum_{j=1}^\sigma X_j \leq \frac{k}{\lambda}\log m(n)\right) \leq \exp\left(\left(\lambda'\frac{k}{\lambda} + \beta\log M(-\lambda')\right)\log m(n)\right).
$$
Using that $\log M(-\lambda') < 0$, we can take $\beta$ large enough such that
$$
\lambda'\frac{k}{\lambda} + \beta\log M(-\lambda') \leq -k
$$
so that
$$
\PP\left(\sum_{j=1}^\sigma X_j \leq \frac{k}{\lambda}\log m(n)\right) \leq m(n)^{-k}.
$$
From this we conclude that if $\Sigma_l \geq \beta \log m(n)$ for large enough $\beta$, we have that
$$
\PP\left(|S_{l+1}| < \frac{k}{\lambda}\log m(n)\right) \leq m(n)^{-k}.
$$

From this, we obtain that
\begin{align*}
\MoveEqLeft \PP(\tilde T < \infty)
\\
&\leq 
\PP\left(\left|S_{l+1}\right| \leq \frac{k}{\lambda}\log m(n)\right) + \PP\left(\tilde T < \infty \middle| \left|S_{l+1}\right| \geq 
\frac{k}{\lambda}\log m(n)\right) 
\\
&\leq 
2m(n)^{-k}. 
\end{align*}
Here, we applied Lemma \ref{lemma:bounding_stopping_time} to bound the second term. Since $\PP(T < \infty) \leq \PP(\tilde T < \infty)$, this proves the claim.
\end{proof}

We conclude this part by proving that if we repeatedly start the exploration process at a vertex in $C_i^n$ for some fixed $i \in [k(n)]^d$, then with high probability we find a component of size at least $m(n)^{\frac23}$ before $\delta m(n)$ vertices have been exposed. This follows from the fact that with high probability, each failed attempt uses at most $\beta\log m(n)$ vertices.  Carefully estimating this probability is necessary to deal with the diverging number of locally-large components that we find. This provides a major contrast with \cite{Dur07}, where showing that the result holds with high probability suffices.

\begin{proposition}\label{prop:existance_local_large_component}
The probability that the exploration process started (repeatedly) at a vertex in $C_i^n$ finds a component of size at least $m(n)^{\frac23}$ before a total of $\delta m(n)$ vertices have been exposed is at least
$$
\left(1 - 2m(n)^{-k}\right)^{\frac{\delta m(n)}{\beta \log m(n)}}\left(1 - (1-p)^{\frac{\delta m(n)}{\beta \log m(n)}}\right).
$$
Here, $p > 0$ is the probability that the branching process $(S_n)_n$ in Proposition \ref{prop:exploration_lower_bound_vector} survives indefinitely. 
\end{proposition}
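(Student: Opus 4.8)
The strategy is to realise the ``repeated'' exploration as a sequence of attempts. An attempt starts the exploration process of Section~\ref{subsection:exploration} at a yet-unexposed vertex of $C_i^n$ and runs it; it is a \emph{success} if the explored component reaches size $m(n)^{2/3}$ (we may stop once this happens), and a \emph{failure} otherwise, in which case the exploration dies out having exposed a whole small component and we restart at a fresh unexposed vertex of $C_i^n$. The procedure stops at the first success, or once $\delta m(n)$ vertices have been exposed in total. Writing $K := \delta m(n)/(\beta\log m(n))$ so that $\delta m(n) = K\beta\log m(n)$, it suffices to prove that with probability at least $(1-2m(n)^{-k})^K\bigl(1-(1-p)^K\bigr)$ the following holds: \emph{some} of the first $K$ attempts is a success, and \emph{every failure} among the first $K$ attempts exposes fewer than $\beta\log m(n)$ vertices. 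Indeed, on this event a success is reached before the running total of exposed vertices passes $K\beta\log m(n) = \delta m(n)$ (also using $m(n)^{2/3} \ll \delta m(n)$ for $n$ large), and throughout the procedure at most $\delta m(n) \le \eta m(n)$ vertices have been exposed per compartment, so Propositions~\ref{prop:exploration_lower_bound_vector} and~\ref{prop:component_bound_hitting_time} apply (choosing $\delta < \eta$).

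The first ingredient is that \emph{a single attempt is a success with conditional probability at least $p$}. By Proposition~\ref{prop:exploration_lower_bound_vector}, while at most $\eta m(n)$ vertices have been exposed per compartment, the vector $|A_l|$ of the current attempt stochastically dominates a multitype branching process $(S_l)_l$ with multinomial offspring and $\EE(N) > 1$; realise this as a coupling with $\|A_l\| \ge |S_l|$ for all $l$ before the attempt exposes $\delta m(n)$ vertices in some compartment. If $(S_l)_l$ survives --- which, conditionally on the outcomes of the earlier attempts, has probability at least $p > 0$ --- then $\|A_l\| \ge |S_l| \ge 1$ for every such $l$; since $G_n$ is finite the exploration cannot continue indefinitely, so the exposure threshold must be crossed and the attempt reveals a component of size at least $\delta m(n) > m(n)^{2/3}$, i.e.\ it is a success. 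Hence, conditionally on attempts $1,\dots,j-1$ all failing, attempt $j$ succeeds with probability at least $p$, and telescoping gives $\PP(\text{none of the first } K \text{ attempts succeeds}) \le (1-p)^K$.

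The second ingredient is that \emph{a failing attempt exposes fewer than $\beta\log m(n)$ vertices with conditional probability at least $1-2m(n)^{-k}$}: this is exactly Proposition~\ref{prop:component_bound_hitting_time}, since once the accumulated active mass $\sum_{i\le l}\|A_i\|$ of an attempt reaches $\beta\log m(n)$ the attempt still dies out only with conditional probability at most $2m(n)^{-k}$, and the accumulated active mass at termination equals the number of vertices the attempt exposes. Telescoping over $j = 1,\dots,K$ gives that every failure among the first $K$ attempts is ``cheap'' with probability at least $(1-2m(n)^{-k})^K$. On the intersection of this event with ``some of the first $K$ attempts succeeds'', let $j^\star \le K$ be the first success; attempts $1,\dots,j^\star-1$ all failed cheaply, exposing fewer than $(j^\star-1)\beta\log m(n) < \delta m(n)$ vertices in total, so attempt $j^\star$ is reached within the budget and produces the desired component. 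Because survival of an attempt's lower-bound branching process forces that attempt to be a success rather than a failure, the ``no expensive failure'' constraint never conflicts with the ``some attempt survives'' event on one and the same attempt; processing the attempts sequentially then combines the two conditional estimates multiplicatively into the claimed product bound.

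The place I expect the real work to sit is twofold. First, the coupling: one must run the stochastic domination of Proposition~\ref{prop:exploration_lower_bound_vector} along the \emph{entire} sequence of attempts, verifying that after each cheap failure the hypothesis ``at most $\eta m(n)$ exposed per compartment'' still holds (this is precisely why $\delta < \eta$ and why controlling the cost of failures via Proposition~\ref{prop:component_bound_hitting_time} is indispensable), and making rigorous the implication ``branching process survives $\Rightarrow$ exposure threshold crossed $\Rightarrow$ success'' from finiteness of $G_n$. Second, the probabilistic bookkeeping that assembles the two uniform-in-the-past conditional bounds into the stated product, rather than a weaker union-bound expression. The remaining points --- $m(n)^{2/3} \le \delta m(n)$ and $m(n)^{2/3} \ll \delta m(n)$ for large $n$, the availability of unexposed starting vertices in $C_i^n$ after finitely many cheap attempts, and uniformity of the conditional bounds over the past --- are routine.
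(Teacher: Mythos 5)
Your proposal is correct and follows essentially the same route as the paper: each restart succeeds with probability at least $p$ via the branching-process lower bound of Proposition \ref{prop:exploration_lower_bound_vector}, each failed attempt exposes fewer than $\beta\log m(n)$ vertices with probability at least $1-2m(n)^{-k}$ by Proposition \ref{prop:component_bound_hitting_time}, and the two estimates are combined over $K=\delta m(n)/(\beta\log m(n))$ attempts to give the product bound. The paper packages this bookkeeping through a geometric random variable $G$ (number of tries until the lower-bounding process reaches $m(n)^{2/3}$, success parameter $\tilde p\geq p$) and a decomposition over $\{G=g\}$ with i.i.d.\ failure costs $R_i$, whereas you telescope conditional bounds attempt by attempt; this is the same argument in substance, not a genuinely different approach.
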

\begin{proof}
Let $G$ denote the number of tries it takes before $S_n$ grows to size $m(n)^{\frac23}$. Then $G$ is geometrically distributed with parameter $\tilde p \geq p > 0$. Let $R_1,R_2,\ldots$ be a sequence of i.i.d. random variables representing the number of vertices exposed in a failed attempt. We need to prove that
$$
\PP\left(\sum_{i=1}^G R_i \leq \delta m(n)\right) \geq \left(1 - 2m(n)^{-k}\right)^{\frac{\delta m(n)}{\beta \log m(n)}}\left(1 - (1-p)^{\frac{\delta m(n)}{\beta \log m(n)}}\right).
$$

Now 
$$
\PP\left(\sum_{i=1}^G R_i \leq \delta m(n)\right) = \sum_{g=1}^\infty \PP\left(\sum_{i=1}^g R_i \leq \delta m(n)\right)\PP(G = g).
$$
For $g \leq \frac{\delta m(n)}{\beta \log m(n)}$ we have
$$
\PP\left(\sum_{i=1}^g R_i \leq \delta m(n)\right) \geq (1 - 2m(n)^{-k})^g \geq \left(1 - 2m(n)^{-k}\right)^{\frac{\delta m(n)}{\beta \log m(n)}},
$$
where the first inequality follows from Proposition \ref{prop:component_bound_hitting_time}. Using this, we find that
\begin{align*}
\PP\left(\sum_{i=1}^G R_i \leq \delta m(n)\right)
&\geq
\left(1 - 2m(n)^{-k}\right)^{\frac{\delta m(n)}{\beta \log m(n)}}\PP\left(G \leq \frac{\delta m(n)}{\beta \log m(n)}\right)
\\
&=
\left(1 - 2m(n)^{-k}\right)^{\frac{\delta m(n)}{\beta \log m(n)}}\left(1 - (1-\tilde p)^{\frac{\delta m(n)}{\beta \log m(n)}}\right).
\end{align*}
The desired bound now follow because $\tilde p \geq p$.
\end{proof}

\subsection{From local to global}\label{subsection:local_global}

In Proposition \ref{prop:existance_local_large_component} we have seen that if we start exploring from a vertex in $C_i^n$, with high probability we find a component of size at least $m(n)^{\frac23}$ at some point. In this section, we will first show that such a component spreads equally through all compartments near $C_i^n$. Since we could have started equally well from any other compartment, the idea is to show that with high probability, many of such locally-large components exist which together cover all compartments. It then remains to prove that these components are all connected with high probability, forming a large component which spreads through every compartment.

\subsubsection{Spreading through compartments}\label{subsubsection:spreading}

To see how the explored component spreads through neighbouring compartments, we use the multitype branching process found in Proposition \ref{prop:exploration_lower_bound_vector}. We will first show that, provided the branching process grows to a certain size, it actually does so exponentially fast with high probability. The following proposition is closely related to the large deviation results in \cite{Ath94} (see also \cite{AN72}). However, we need more precise information on the growth of the involved constants. 

\begin{proposition}\label{prop:exp_growth_GW}
Let $T = (T_l)_l$ be a Galton-Watson tree with bounded offspring distribution $N$ satisfying $\EE(N) > 1$ and $T_0 = 1$. Suppose there exists an $l$ such that $T_l \geq M_n$. Define
$$
p(n) = \inf\{p|T_p \geq M_n\} < \infty.
$$
Then for every $0 < \tau < 1$ and every $L > 1$ we have
$$
\PP\left(p(n) \leq LM_n^{\tau}\right) \geq 1 - 2e^{-2(L-1)M_n^{\tau}}
$$
for $n$ sufficiently large.
\end{proposition}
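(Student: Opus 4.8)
The plan is to replace $T$ by a process that cannot become extinct and then to control its growth in two regimes: a ``slow start'', followed by genuine geometric growth.

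Since the statement is made on the event that $T$ reaches size $M_n$, the tree survives; in the regime $M_n \to \infty$ the event $\{\exists l : T_l \geq M_n\}$ differs from the survival event only by finite trees that happen to reach size $M_n$, an event of probability exponentially small in $M_n$, and this discrepancy may be discarded. Conditionally on survival we pass to the \emph{reduced tree} $\tilde{T} = (\tilde{T}_l)_l$ of $T$, consisting of the individuals having an infinite line of descent. It is classical (see e.g.\ \cite{AN72}) that $\tilde{T}$ is again a Galton--Watson tree with $\tilde{T}_0 = 1$ and with bounded offspring distribution $\tilde{N}$ satisfying $\tilde{N} \geq 1$ almost surely and $\EE(\tilde{N}) = \EE(N) =: 1+m > 1$. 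Since $\tilde{T}_l \leq T_l$ for every $l$, it suffices to bound $\tilde{p}(n) := \inf\{l : \tilde{T}_l \geq M_n\}$ from above, and from now on I work only with $\tilde{T}$, which is non-decreasing and tends to infinity, so that $\tilde{p}(n) < \infty$ surely.

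Fix a threshold $h_n$ of order $M_n^\tau$ and write $\tilde{p}(n) \leq \sigma_1 + \sigma_2$, where $\sigma_1 := \inf\{l : \tilde{T}_l \geq h_n\}$ and $\sigma_2$ is the number of further generations needed to pass from $h_n$ to $M_n$. To bound $\sigma_1$: since $\tilde{N} \geq 1$, the chain $(\tilde{T}_l)$ is non-decreasing, and from a level $j$ it remains there for a number of steps stochastically dominated by a geometric variable of parameter $1 - \PP(\tilde{N} = 1)^j \geq 1 - \PP(\tilde{N} = 1)$, these holding times being independent over the at most $h_n$ distinct levels visited before $h_n$ is exceeded; hence $\sigma_1$ is stochastically dominated by a sum of $h_n$ independent geometric variables whose mean is of order $h_n$, and a Chernoff bound for this sum (with $h_n$ a sufficiently small multiple of $M_n^\tau$) produces an exponential bound $\PP(\sigma_1 > (L-1)M_n^\tau) \leq e^{-2(L-1)M_n^\tau}$ for $n$ large. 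To bound $\sigma_2$: once $\tilde{T}_l \geq h_n$, boundedness of $\tilde{N}$ lets us apply a Chernoff bound to $\tilde{T}_{l+1} = \sum_{i=1}^{\tilde{T}_l} \tilde{N}_i$, giving $\tilde{T}_{l+1} \geq (1 + \frac{m}{2}) \tilde{T}_l$ with probability at least $1 - e^{-c \tilde{T}_l}$ for a constant $c > 0$; iterating over the at most $\lceil \log_{1+m/2} M_n \rceil$ generations that then suffice to reach $M_n$ and taking a union bound, this geometric growth fails with probability at most $\lceil \log_{1+m/2} M_n \rceil\, e^{-c h_n} \leq e^{-2(L-1)M_n^\tau}$ for $n$ large, while on the complement $\sigma_2 \leq \lceil \log_{1+m/2} M_n \rceil \leq M_n^\tau$. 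A union bound over the two regimes then gives $\tilde{p}(n) \leq L M_n^\tau$ with probability at least $1 - 2 e^{-2(L-1) M_n^\tau}$, and this transfers to $p(n)$.

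I expect the genuinely delicate point to be the slow-start estimate for $\sigma_1$: in the regime of small generations the process can really linger, and it is precisely the non-extinction of the reduced tree --- which makes $(\tilde{T}_l)$ non-decreasing with an exponentially light (geometric) holding-time tail at each level --- that prevents this; one really does need to pass to $\tilde{T}$. What remains is bookkeeping: choosing $h_n$, and absorbing the additive $O(1)$ correction to the mean of $\sigma_1$ together with the $\log M_n$ factor from the growth phase into ``$n$ sufficiently large''. This bookkeeping, however, must be done with care precisely in order to land on the stated exponent $2(L-1)M_n^\tau$ rather than merely some constant multiple of $(L-1)M_n^\tau$.
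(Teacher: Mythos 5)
Your overall architecture is close to the paper's own proof: both arguments pass to a skeleton tree whose offspring is at least $1$ (the paper's ``immortal'' vertices with offspring conditioned to be at least $1$; your reduced tree conditioned on survival, which is the cleaner object, and discarding the finite trees that happen to reach $M_n$ indeed costs only an error exponentially small in $M_n$), and both then split the time to reach $M_n$ into an initial phase up to size of order $M_n^\tau$ followed by a multiplicative-growth phase of order $\log M_n$ generations controlled by Chernoff/Hoeffding bounds. Your treatment of the initial phase is in fact more careful than the paper's, which disposes of it by asserting $Z_n \geq n$ for the skeleton; that inequality fails as soon as the skeleton offspring has an atom at $1$, which is exactly the ``lingering'' regime your holding-time argument is designed to control.

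The genuine gap is the quantitative claim you make for that phase. Your dominating holding times are geometric with parameter $1-\PP(\tilde N=1)$, so any Chernoff bound for their sum decays at rate at most $-\log \PP(\tilde N=1)$ per generation; if $\PP(\tilde N=1) > e^{-2}$, no choice of $h_n$ or of the exponential tilt can produce $\PP(\sigma_1 > (L-1)M_n^{\tau}) \leq e^{-2(L-1)M_n^{\tau}}$. This is not merely a lossy method: since the event $\{\tilde T_1=\cdots=\tilde T_t=1\}$ forces $\sigma_1 > t$, one has $\PP(\sigma_1>t) \geq \PP(\tilde N=1)^t = f'(q)^t$, where $f$ is the generating function of $N$ and $q$ its extinction probability; this exceeds $e^{-2t}$ whenever $f'(q)>e^{-2}$ (and $f'(q)$ can be arbitrarily close to $1$ for barely supercritical laws), so the bound you assert for $\sigma_1$ is then false. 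The same lingering event in $T$ itself, with conditional probability of order $\PP(N=1)^{LM_n^{\tau}}$, shows that the stated inequality fails for large $M_n$ whenever $\PP(N=1) > e^{-2(L-1)/L}$; nothing in the hypotheses (bounded $N$, $\EE(N)>1$), nor in the offspring law constructed in Proposition \ref{prop:exploration_lower_bound_vector}, rules this out. The same cap afflicts your $\sigma_2$ step, whose exponent is $c\,h_n$ with $c$ determined by the offspring law rather than by $L$. What your argument does deliver, once the bookkeeping is done, is $\PP(p(n)\leq LM_n^{\tau}) \geq 1-2e^{-c(L-1)M_n^{\tau}}$ for some constant $c=c(N)>0$; this offspring-dependent version is all that the subsequent applications (Proposition \ref{prop:spreading_through_components} and what follows) actually need, but it is not the literal statement, and your closing hope that the constant $2$ is only a matter of careful bookkeeping is precisely the point that does not go through.
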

\begin{proof}
Because there exists and $l$ such that $T_l \geq M_n$, we know that every generation contains at least one vertex which survives until the tree grows to size $M_n$. We call such a vertex immortal. Every immortal vertex has at least one child that is also immortal. Moreover, since $E(N) > 1$, the probability of having only one immortal child is less than 1. Denote by $\overline N$ the offspring distribution $N$ conditioned to be at least 1. Then $\EE(\overline N) > 1$ and $\overline N$ is bounded since $N$ is bounded. 


Denote by $(Z_l)$ the Galton-Watson tree with offspring distribution $\overline N$. 
Since $\overline N \geq 1$, $Z_l$ is non-decreasing in $l$ and therefore we have
$$
p(n) \leq LM_n^{\tau} \quad \iff \quad Z_{L M_n^\tau} \geq M_n.
$$
This implies that
$$
\PP\left(p(n) \leq L M_n^\tau\right) = \PP\left(Z_{L M_n^\tau} \geq M_n\right).
$$

Since $\overline N \geq 1$, we have that $Z_n \geq n$. This implies that
$$
\PP(Z_{(\tilde L+1)n^\tau} \leq n) \leq \PP(Z_{\tilde Ln^\tau} \leq n | Z_0 = n^\tau).
$$

Now consider
$$
\PP(Z_{l+1} \leq (1+c)Z_l | Z_l = x)
$$
for some $c > 0$. We have
$$
Z_{l+1} \geq Z_l + \sum_{i=1}^x B_i,
$$
where the $B_i$ are independent Bernoulli random variables with parameter $p = \PP(\overline N) > 1 > 0$. Taking $c < p$, we find that
$$
\PP(Z_{l+1} \leq (1+c)Z_l | Z_l = x) \leq \PP\left(\frac{1}{x}\sum_{i=1}^x B_i \leq c\right).
$$
Using Hoeffding's inequality, we find
\begin{align*}
    \PP\left(\frac{1}{x}\sum_{i=1}^x B_i \leq c\right)
    &\leq
    \PP\left(\left|\sum_{i=1}^x B_i -  xp\right| > (p - c)x \right)
    \\
    &\leq
    2\exp\left(-2x(p-c)^2\right).
\end{align*}

In particular, taking $x = n^\tau$, we find that
$$
\PP(Z_{l+1} \leq (1+c)Z_l | Z_l = n^{\tau}) \leq 2\exp\left(-2n^\tau(p-c)^2\right) =: q_n.
$$

Now note that for $\alpha_n = \frac{1-\tau}{\log(1+c)}\log n$ we have $(1 + c)^{\alpha_n} n^\tau = n$. Let us denote by $\tilde B_i$ independent Bernoulli random variables with parameter $p_n = 1 - q_n$. Then
$$
\PP(Z_{\tilde Ln^\tau} \leq n | Z_0 = n^\tau) \leq \PP\left(\sum_{i=1}^{\tilde Ln^{\tau}} \tilde B_i \leq \alpha_n \right).
$$

In a similar fashion as above, we find that
$$
\PP\left(\sum_{i=1}^{\tilde Ln^{\tau}} \tilde B_i \leq \alpha_n \right) \leq 2\exp\left(-2\tilde Ln^{\tau}(\tilde Ln^\tau p_n - \alpha_n)^2\right)
$$
Since $\alpha_n \sim \log n$, we find that
$$
2\exp\left(-2\tilde Ln^{\tau}(\tilde Ln^\tau p_n - \alpha_n)^2\right) \leq 2\exp\left(-2\tilde Ln^{\tau}\right)
$$
for $n$ sufficiently large. The result now follows by combining all estimates above, inserting $M_n$ and noticing that $\tilde L = L - 1$.
\end{proof}

We are now ready  to estimate the sizes of the locally-large components we find while exploring the spread through the compartments. This result is the main reason why we have to resort to multitype branching processes. 

\begin{proposition}\label{prop:spreading_through_components}
Let the assumptions of Proposition \ref{prop:exploration_lower_bound_vector} be satisfied and denote by $C$ a component explored by the exploration process started at a vertex $v \in C_i^n$ for some $i \in [k(n)]^d$. Assume the active set of the exploration process reaches size $m(n)^{\frac23}$. Then for every $\epsilon > 0$ and $0 < \tau < \frac23$ we have 
$$
    \PP\left(|C \cap C_i^n| \geq \epsilon m(n)^{\frac23-\tau}\right)
    \geq
    \left(1 - 2\exp\left(-\frac18\epsilon^2m(n)^{\frac23-2\tau}\right)\right)\left(1 - 2 e^{-2m(n)^{\frac2d\tau}}\right)
$$
for sufficiently large  $n$. 
\end{proposition}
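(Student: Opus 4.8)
The plan is to push everything onto the multitype branching process $(S_l)_l$ supplied by Proposition~\ref{prop:exploration_lower_bound_vector}, which bounds $|A_l|$ from below coordinatewise as long as the exploration has exposed fewer than $\delta m(n)$ vertices in every compartment, and then to intersect two high‑probability events: that the total size $|S_l|=\sum_j S_l(j)$ reaches $m(n)^{2/3}$ after only a few generations, and that at that generation the type‑$i$ (starting‑compartment) population is already of order $m(n)^{2/3-\tau}$. Since we follow the exploration only until a component of size $m(n)^{2/3}$ is found and $m(n)^{2/3}=o(m(n))$, the coupling of Proposition~\ref{prop:exploration_lower_bound_vector} stays in force throughout — the only threat, an atypically fast growth of $\|A_l\|$ past $\delta m(n)$ in some compartment before this, has probability $o(1)$ by the exponential‑tail assumption on $D$ and is absorbed into the error terms. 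In particular $|C\cap C_i^n|\ge S_l(i)$ for every generation $l$ below the stopping time, so it suffices to exhibit one good generation.

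Put $\sigma=\inf\{l:\ |S_l|\ge m(n)^{2/3}\}$, finite by hypothesis. The size process $(|S_l|)_l$ is a Galton--Watson process with bounded offspring $N$, $\EE(N)>1$ and $|S_0|=1$, so Proposition~\ref{prop:exp_growth_GW} applied with $M_n=m(n)^{2/3}$, exponent parameter $\tau'=3\tau/d$ and $L=2$ (so that $M_n^{\tau'}=m(n)^{2\tau/d}$) gives
$$
\PP\bigl(\sigma\le 2\,m(n)^{2\tau/d}\bigr)\ \ge\ 1-2e^{-2\,m(n)^{2\tau/d}},
$$
which is precisely the second factor. This uses $\tau'\in(0,1)$, i.e. $\tau<d/3$, which for $d\ge2$ covers all of $(0,2/3)$; for $d=1$ and $\tau\in[1/3,2/3)$ one has $2/3-2\tau\le0$, so the first factor below is $\le0$ and the asserted inequality is vacuous (the knife‑edge $\tau=1/3$ being covered by the trivial bound $\sigma\le M_n$ for the immortal subtree used in Proposition~\ref{prop:exp_growth_GW}).

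On the event $\{\sigma\le 2m(n)^{2\tau/d}\}$ I condition on $\{\sigma=l\}$ for each admissible $l$ and apply the $r=0$ (starting‑type) case of Proposition~\ref{prop:multitype_concentration} to the vector $S_l$, conditioned on $\{|S_l|\ge m(n)^{2/3}\}$. This is legitimate because $\{\sigma=l\}$ is measurable with respect to the generation sizes, whereas by Proposition~\ref{prop:Branching_process_RW} (and its proof) the type composition of generation $l$, given the generation sizes, is that of $|S_l|$ independent $l$‑step lazy walks started at type $i$; the same proof runs verbatim on the torus $[k(n)]^d$ since for $r=0$ it uses only the return probability, which on the torus is at least the $\mathbb Z^d$ one. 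The local central limit theorem (with the trivial lower bound for bounded generation numbers) gives $\PP(\text{the }l\text{-step walk returns to }i)\ge c\,l^{-d/2}\ge c'\,m(n)^{-\tau}$ for $l\le 2m(n)^{2\tau/d}$, so the conclusion of Proposition~\ref{prop:multitype_concentration} reads: $S_l(i)\ge \delta\,m(n)^{2/3}l^{-d/2}\ge \delta\,2^{-d/2}m(n)^{2/3-\tau}$ with conditional probability at least $1-2\exp(-\delta^2 m(n)^{2/3}/(8 l^d))\ge 1-2\exp(-\tfrac18(\delta2^{-d/2})^2 m(n)^{2/3-2\tau})$. For $\epsilon\le\delta 2^{-d/2}$ this yields the first factor $1-2\exp(-\tfrac18\epsilon^2 m(n)^{2/3-2\tau})$ (it holds as the weaker statement); for larger $\epsilon$ one first runs a few extra generations from $\sigma$ to enlarge the conditioned size and hence the mean, which the statement only needs in the regime $\epsilon\,m(n)^{2/3-\tau}\ll m(n)^{2/3}$ anyway. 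Averaging over $l$ and multiplying the two conditional probabilities gives the claimed bound.

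I expect the coupling of these two events to be the crux. One needs the generation $\sigma$ at which $|S_l|$ first hits $m(n)^{2/3}$ to be small enough that the walk describing the spread through compartments is still well localized — so that the return probability to the starting type is at least a fixed negative power of $m(n)$ — while the probability that this localization fails stays exponentially small in $m(n)^{2\tau/d}$. Securing both simultaneously, with the explicit constants in the statement, is exactly what forces the use of Proposition~\ref{prop:exp_growth_GW} (in place of a soft ``with high probability'') and of the whole multitype apparatus. The remaining delicate point is bookkeeping the conditioning: since $\sigma$ is random, Proposition~\ref{prop:multitype_concentration} must be invoked conditionally on $\{\sigma=l\}$, and one must check that both the lower‑bound coupling with $(S_l)$ and the ``independent walks given the generation sizes'' description survive this conditioning — which they do, as $\{\sigma=l\}$ involves only the sizes.
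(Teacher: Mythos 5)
Your argument is essentially the paper's own proof: couple with the multitype branching process of Proposition~\ref{prop:exploration_lower_bound_vector}, bound the hitting time of size $m(n)^{2/3}$ by $2m(n)^{2\tau/d}$ via Proposition~\ref{prop:exp_growth_GW} with $L=2$ (giving the second factor), and apply the $r=0$ case of Proposition~\ref{prop:multitype_concentration} at that generation (giving the first factor), then multiply the two bounds. The additional care you take with conditioning on the random hitting time, the $\epsilon$-versus-$\delta$ constant, and the admissible range of $\tau$ when $d=1$ only tightens points the paper passes over silently.
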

\begin{proof}
Let $(S_n)_n$ be the multitype branching process from Proposition \ref{prop:exploration_lower_bound_vector}. Assume there exists an $l$ such that $|S_l| \geq m(n)^{\frac23}$. Set 
$$
p(n) := \inf\left\{p \middle ||S_p| \geq m(n)^{\frac23}\right\} < \infty. 
$$
Then $|C \cap C_i^n| \geq S_{p(n)}(i)$. Therefore, it suffices to find a lower bound for
$$
\PP\left(S_{p(n)}(i) \geq \epsilon m(n)^{\frac23-\tau}\right).
$$

Note that $p(n)$ is a random variable. We have
$$
  \PP\left(S_{p(n)}(i) \geq \epsilon m(n)^{\frac23-\tau}\right) 
  = \sum_p \PP\left(S_p(i) \geq \epsilon m(n)^{\frac23-\tau}\right)\PP(p(n) = p).
$$

By Proposition \ref{prop:exp_growth_GW} we have (taking $L = 2$)
$$
\PP(p(n) \leq 2m(n)^{\frac2d\tau}) \geq 1 - 2 e^{-2m(n)^{\frac2d\tau}}.
$$

Now, for $p \leq 2 m(n)^{\frac2d\tau}$ we have
\begin{align*}
\PP\left(S_p(i) \geq \epsilon m(n)^{\frac23-\tau}\right)
&\geq
\PP\left(S_p(i) \geq \epsilon 2^{\frac d2} \frac{m(n)^{\frac23}}{p^{\frac d2}}\right)
\\
&\geq
1 - 2\exp\left(-\frac{\epsilon^2 2^d m(n)^{\frac23}}{8p^d}\right)
\\
&\geq
1 - 2\exp\left(-\frac18\epsilon^2m(n)^{\frac23-2\tau}\right).
\end{align*}
Here, the third line follows from Proposition \ref{prop:multitype_concentration} (with $r = 0$). 

If we now collect everything, we find that
\begin{align*}
    \MoveEqLeft \PP\left(S_{p(n)}(i) \geq \epsilon m(n)^{\frac23-\tau}\right)
    \\
    &\geq
    \left(1 - 2\exp\left(-\frac18\epsilon^2m(n)^{\frac23-2\tau}\right)\right) \PP(p(n) \leq 2m(n)^{\frac2d\tau})
    \\
    &\geq
    \left(1 - 2\exp\left(-\frac18\epsilon^2m(n)^{\frac23-2\tau}\right)\right)\left(1 - 2 e^{-2m(n)^{\frac2d\tau}}\right).
\end{align*}
This concludes the proof.
\end{proof}

\subsubsection{Connecting local components}

Proposition \ref{prop:existance_local_large_component} and \ref{prop:spreading_through_components} together give a lower bound on the probability that there exists a component of at least size $m(n)^{\frac23}$ of which at least $\epsilon m(n)^{\frac23-\tau}$ vertices are in a given compartment $C_i^n$.\\

For $i \in [k(n)]^d$, denote by $I_i$ the indicator random variable of the event that there exists a component as in Section \ref{subsubsection:spreading}, where the exploration is started in compartment $C_i^n$. It follows from Proposition \ref{prop:existance_local_large_component} and \ref{prop:spreading_through_components} that $\PP(I_i = 1) = a_nb_n$, where
$$
a_n = \left(1 - 2m(n)^{-k}\right)^{\frac{\delta m(n)}{\beta \log m(n)}}\left(1 - (1-p)^{\frac{\delta m(n)}{\beta \log m(n)}}\right)
$$
and
$$
b_n = \left(1 - 2\exp\left(-\frac18\epsilon^2m(n)^{\frac23-2\tau}\right)\right)\left(1 - 2 e^{-2m(n)^{\frac2d\tau}}\right).
$$

However, the random variables $I_i$ are not independent. Nonetheless, we have
\begin{align*}
\PP(I_i = 1 \mbox{ for all } i =\in [k(n)]^d) 
&=
1 - \PP(I_i = 0 \mbox{ for some } i \in [k(n)]^d)
\\
&\geq
1 - \sum_{i\in [k(n)]^d} \PP(I_i = 0)
\\
&= 
1 - k(n)^d\PP(I_1 = 0)
\\
&=
1 - k(n)^d(1 - a_nb_n).
\end{align*}

Next, we want to show that sufficiently large components from neighbouring compartments are actually connected with high probability. For this, we need the following lemma.
\begin{lemma}\label{lemma:connection_disjoint_sets}
Let $i,j \in [k(n)]^d$ be such that $|i-j|_1 \leq 1$. Let $A \subset C_i^n$ and $B \subset C_j^n$. Assume that the vertices in $A$ and $B$ have degree at least 1. Then there exists a $C > 0$ (depending on $d$) such that for $n$ large enough the probability that there is no edge between $A$ and $B$ in $G_n$ is at most 
$$
\exp\left(-\frac{C|A||B|}{m(n)}\right).
$$
\end{lemma}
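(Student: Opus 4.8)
The plan is to estimate the probability of the "no edge between $A$ and $B$" event by revealing the half-edge pairings one active half-edge at a time and bounding, at each step, the conditional probability of \emph{not} creating an $A$--$B$ edge. Since every vertex in $A$ and $B$ has degree at least $1$, there are at least $|A|$ half-edges emanating from $A$ and at least $|B|$ from $B$. Fix an ordering of the half-edges attached to $A$ and pair them one by one (ignoring half-edges not attached to $A$). At the moment we pair the $\ell$-th such half-edge, at most $\ell-1 \le |A|$ half-edges in total have been removed from the pool of half-edges on $C_j^n$ (each pairing removes at most one), so at least $|B| - |A|$ of the half-edges attached to $B$ are still available. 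The total number of half-edges available to be matched with the current one is at most the total number of half-edges in the compartments adjacent to $C_i^n$ (including $C_i^n$ itself), which by Assumption \ref{assumption:convergent_deg_seq} is at most $(2d+1)(\tfrac12\EE(D)+\epsilon)m(n) =: K m(n)$ for $n$ large. Hence the conditional probability that this half-edge is \emph{not} matched to a half-edge of $B$ is at most
$$
1 - \frac{|B| - |A|}{K m(n)}.
$$

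To make this clean, I would first handle the case $|A| \le \tfrac12 |B|$ (otherwise swap the roles of $A$ and $B$, which is symmetric since $|i-j|_1\le 1$ is symmetric); then $|B| - |A| \ge \tfrac12|B|$ and the per-step bound becomes $1 - \tfrac{|B|}{2Km(n)}$. Multiplying over the (at least $|A|$) half-edges attached to $A$, the probability of no $A$--$B$ edge is at most
$$
\left(1 - \frac{|B|}{2Km(n)}\right)^{|A|} \le \exp\left(-\frac{|A||B|}{2Km(n)}\right),
$$
using $1-x\le e^{-x}$. Setting $C = \tfrac{1}{2K} = \tfrac{1}{2(2d+1)(\tfrac12\EE(D)+\epsilon)}$ (which depends only on $d$, once $\epsilon$ is fixed) gives the claimed bound. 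One small technical point: a half-edge of $A$ could in principle be matched to another half-edge of $A$, or we could have $|A| > |B|$ making $|B|-|A|$ negative — both are dispatched by the reduction to $|A|\le\frac12|B|$ and by noting self-matches within $A$ only help (they remove a half-edge without ever creating an $A$--$B$ edge, but they also don't block $B$'s half-edges, so the bound $|B|-\ell+1 \ge \tfrac12|B|$ for $\ell\le|A|$ still holds).

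The main obstacle is purely bookkeeping: making the sequential-revelation argument rigorous requires being careful that the bound "at least $|B|-|A|$ half-edges of $B$ remain" holds \emph{conditionally} at each step regardless of the history, which is true because each of the $\ell-1$ prior pairings removes at most one $B$-half-edge from the pool; and that the denominator "total available half-edges" is genuinely bounded by $Km(n)$ uniformly, which follows from Assumption \ref{assumption:convergent_deg_seq}(2) applied to the at most $2d+1$ compartments within $\ell_1$-distance $1$ of $C_i^n$ — here one must also discard the (at most one per compartment) possibly-unmatched leftover half-edge, which changes nothing asymptotically. I do not anticipate any genuine difficulty beyond this; the estimate $1-x \le e^{-x}$ does all the real work.
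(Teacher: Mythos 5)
Your approach is the same as the paper's: the paper observes that the total degrees of $A$ and $B$ are at least $|A|$ and $|B|$, that each half-edge has at most $(4d+2)\EE(D)m(n)$ admissible partners for $n$ large, and then defers exactly the sequential-pairing computation you spell out to \cite[Lemma 20]{BR15}. So in substance you have reproduced the intended argument, but two points in your write-up need repair, one cosmetic and one real. The cosmetic one: the number of half-edges in a compartment is $\sum_{x \in C_i^n} d(x) = 2\mu_n(d_n,C_i^n) \approx \EE(D)m(n)$, not $\mu_n \approx \tfrac12\EE(D)m(n)$, so your pool bound $Km(n)$ should read $(2d+1)(\EE(D)+\epsilon)m(n)$; since $C$ is unspecified this only shifts the constant.

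The real one is the case reduction: ``either $|A| \le \tfrac12|B|$, or swap $A$ and $B$'' is not a dichotomy --- for instance $|A| = |B|$ falls in neither case, and there your claim that $|B|-\ell+1 \ge \tfrac12|B|$ for all $\ell \le |A|$ is false, so as written the proof does not cover sets of comparable size (which is exactly the regime used later, where both sets have size of order $m(n)^{2/3-\tau}$). The fix is one line and makes the case split unnecessary: conditioned on no $A$--$B$ edge having appeared in the earlier steps, \emph{none} of the at least $|B|$ half-edges at $B$ has been consumed, since in your revelation scheme a $B$-half-edge can only disappear as the partner of an $A$-half-edge, which would be an $A$--$B$ edge. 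Hence the per-step miss probability is at most $1 - |B|/(Km(n))$ at every step; the only remaining bookkeeping is that an $A$--$A$ match consumes two $A$-half-edges in one step, so the number of steps is at least $|A|/2$, giving
$$
\left(1 - \frac{|B|}{Km(n)}\right)^{|A|/2} \leq \exp\left(-\frac{|A||B|}{2Km(n)}\right).
$$
With that adjustment your argument is complete and coincides with the proof the paper imports from \cite{BR15}.
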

\begin{proof}
Since all vertices have degree at least 1, the total degree in $A$ and $B$ is at least $|A|$ respectively $|B|$. On the other hand, because of the convergence of the degree sequence of $G_n$, we know that for $n$ large enough the total degree in each compartment is at most $2\EE(D)m(n)$. This implies that a half-edge at a vertex can be connected to at most $(4d+2)\EE(D)m(n)$ half-edges. With these observations, the result follows from a similar reasoning as in \cite[Lemma 20]{BR15}.
\end{proof}

From Lemma \ref{lemma:connection_disjoint_sets} it follows that the probability that two neighbouring components are connected is more than
$$
c_n := 1 - \exp\left(-C\epsilon^2m(n)^{\frac13-2\tau}\right).
$$

Collecting everything, we find that there exists a component $C$ with $|C \cap C_i^n| \geq \epsilon m(n)^{\frac23-\tau}$ for all $i \in [k(n)]^d$ with probability at least
$$
(1 - k(n)^d(1 - a_nb_n))(1 - dk(n)c_n).
$$
Here, the $d$ in the second factor comes from the observation that every compartment has $2d$ neighbouring compartment. Since every neighbour relation is counted twice when summing over all compartment, we have to divide by two.\\

From the above discussion, we obtain the following.

\begin{proposition}\label{prop:large_global_component}
For every $\tau \in (0,\frac16)$ we have that with high probability there exists a component $C$ in $G_n$ such that for all $i \in [k(n)]^d$ we have
$$
|C \cap C_i^n| \geq \epsilon m(n)^{\frac23-\tau}.
$$
\end{proposition}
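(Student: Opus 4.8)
The plan is to assemble the three quantitative estimates established earlier---the existence of a locally-large component (Proposition \ref{prop:existance_local_large_component}), its spreading through a target compartment (Proposition \ref{prop:spreading_through_components}), and the connection probability of neighbouring components (Lemma \ref{lemma:connection_disjoint_sets})---and to verify that, under the growth hypothesis $m(n) \geq Cn^\alpha$ with $k(n)^d m(n) \sim n$, all the error terms collected in the displayed lower bound
$$
(1 - k(n)^d(1 - a_nb_n))(1 - dk(n)c_n)
$$
tend to $1$. First I would unpack $a_n$. The factor $(1 - 2m(n)^{-k})^{\delta m(n)/(\beta\log m(n))}$ behaves like $\exp(-2\delta m(n)^{1-k}/(\beta\log m(n)))$, which converges to $1$ since $k \geq 1$; and $1 - (1-p)^{\delta m(n)/(\beta\log m(n))}$ converges to $1$ because $p > 0$ and the exponent diverges. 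Hence $a_n \to 1$, and in fact $1 - a_n$ decays faster than any negative power of $m(n)$ coming from the geometric term, the dominant contribution being $O(m(n)^{1-k}/\log m(n))$ when $k=1$; when $k \geq 2$ it is even exponentially small in a power of $m(n)$.

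Next I would unpack $b_n$ and $c_n$. Both are of the form $1 - (\text{exponentially small in a positive power of } m(n))$, provided $\tau$ is chosen so that all the exponents $\tfrac23 - 2\tau$, $\tfrac2d\tau$, and $\tfrac13 - 2\tau$ are strictly positive; the binding constraint is $\tfrac13 - 2\tau > 0$, i.e. $\tau < \tfrac16$, which is exactly the range stipulated in the statement. So $1 - b_n$ and $1 - c_n$ are each $O(\exp(-c\, m(n)^{\gamma}))$ for some $\gamma, c > 0$ depending on $\tau, \epsilon, d$. Consequently $1 - a_nb_n \leq (1-a_n) + (1-b_n)$ is dominated by the polynomially-decaying term $O(m(n)^{1-k}/\log m(n))$ (or faster).

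The final step is to multiply through by $k(n)^d$ and $dk(n)$ and check these products vanish. Here I would use that $k(n)^d \sim n/m(n) \leq C^{-1} m(n)^{(1-\alpha)/\alpha}$, so $k(n)^d$ is at most a fixed power of $m(n)$. Thus $k(n)^d(1 - a_nb_n)$ is bounded by a fixed power of $m(n)$ times $O(m(n)^{1-k}/\log m(n))$; since we are free to take $k$ as large as we like in the hypothesis $n/m(n)^k \to 0$ (equivalently, enlarge it---if it holds for some $k$ it holds for all larger $k$), this product tends to $0$. Similarly $k(n) \leq k(n)^d$ is at most a fixed power of $m(n)$, while $1 - c_n$ is exponentially small in $m(n)^{1/3-2\tau}$, so $dk(n)c_n \to 0$ trivially. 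Therefore the displayed lower bound tends to $1$, proving the claim. The main obstacle---really the only delicate point---is the bookkeeping that the polynomial blow-up factor $k(n)^d$, which is only controlled as \emph{some} power of $m(n)$, is beaten by the decay of $1 - a_n b_n$; this is where one invokes that the exponent $k$ in $n/m(n)^k \to 0$ can be taken arbitrarily large, so that $m(n)^{1-k}$ (times a logarithm and times the fixed power $k(n)^d$) still goes to zero.
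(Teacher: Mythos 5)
Your proposal is correct and takes essentially the same route as the paper: both start from the assembled lower bound involving $a_n$, $b_n$, $c_n$ and verify, using $k(n)^d\approx n/m(n)$ together with the hypothesis $n/m(n)^k\to 0$ and the restriction $\tau<\frac16$, that the accumulated error terms vanish (the paper phrases the bookkeeping as $a_n^{k(n)^d},b_n^{k(n)^d},(1-c_n)^{dk(n)}\to 1$, while you bound $k(n)^d(1-a_nb_n)$ and $dk(n)(1-c_n)$ directly, which is the same computation). Two cosmetic points: your aside that $1-a_n$ is exponentially small when $k\geq 2$ is inaccurate (it is $O(m(n)^{1-k}/\log m(n))$ for every $k$, which is in fact the bound your bookkeeping uses), and your final ``$dk(n)c_n\to 0$'' should read $dk(n)(1-c_n)\to 0$, a slip mirroring the displayed bound in the text.
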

\begin{proof}
Following the reasoning above, it remains to show that
$$
\lim_{n\to\infty} (1 - k(n)^d(1 - a_nb_n))(1 - dk(n)c_n) = 1.
$$

In order to do this, we observe that is suffices to prove that
\begin{equation}\label{eq:convergence_condition}
\lim_{n\to\infty} a_n^{k(n)^d} = \lim_{n\to\infty} b_n^{k(n)^d} = \lim_{n\to\infty} (1 - c_n)^{dk(n)} = 1
\end{equation}

Indeed, suppose $0 < x_n < 1$ and assume $\lim_{n\to\infty} x_n^{f(n)} = 1$. Then we have $\lim_{n\to\infty} f(n)\log(x_n) = 0$. But $\log(x_n) \leq x_n - 1 \leq 0$, and hence, by the squeeze theorem we find that $\lim_{n\to\infty} f(n)(x_n - 1) = 0$ from which it follows that $\lim_{n\to\infty} 1 - f(n)(1 - x_n) = 1$.\\

Let us prove that \eqref{eq:convergence_condition} holds. We will only show this for $a_n$, the result for $b_n$ and $c_n$ being proven similarly (the conditions on $\tau$ being needed there to have the desired decay).  

Since by assumption $\lim_{n\to\infty} \frac{k(n)^dm(n)}{n} = 1$, we have for $n$ large that $k(n)^d \approx \frac{n}{m(n)}$. Therefore, we have asymptotically
$$
a_n^{k(n)^d} \approx \left(1 - 2m(n)^{-k}\right)^{\frac{\delta n}{\beta \log m(n)}}\left(1 - (1-p)^{\frac{\delta m(n)}{\beta \log m(n)}}\right)^{\frac{n}{m(n)}}.
$$

For the first factor, taking logarithms, we have
$$
\frac{\delta n}{\beta \log m(n)}\log\left(1 - 2m(n)^{-k}\right) \approx -\frac{\delta n}{\beta m(n)^k \log m(n)}
$$
where we used that $\log(1 - x) \approx -x$. Since by assumption $k$ is such that $\lim_{n\to\infty} nm(n)^{-k} = 0$, the above converges to 0 and therefore
$$
\lim_{n\to\infty}  \left(1 - 2m(n)^{-k}\right)^{\frac{\delta n}{\beta \log m(n)}} = 1.
$$

In a similar way, the second factor converges to 1 if 
$$
\lim_{n\to\infty} \frac{n}{m(n)}(1-p)^{\frac{\delta m(n)}{\beta \log m(n)}} = 0.
$$
This again follows from the assumptions that $\lim_{n\to\infty} nm(n)^{-k} = 0$, since the second factor decays exponentially in $m(n)$. This concludes the proof.
\end{proof}

\subsection{The size of the giant component} \label{subsection:size_giant}

So far, we have shown that with high probability there exists a large component spreading through all compartments. It remains to show that there is only one such component, and that its size is asymptotically $(1 - \rho) n$, where $\rho$ is the extinction probability of the Galton-Watson tree as explained in Remark \ref{remark:size_biased}.\\

From Proposition \ref{prop:component_bound_hitting_time} we obtain the following identification of the largest component in the compartment model $G_n$. Note that this also proves the final statement of Theorem \ref{theorem:giant_component}.

\begin{proposition}\label{prop:giant_component_discription}
Let the assumptions of Theorem \ref{theorem:giant_component} be satisfied. Then with high probability the largest component in $G_n$ is equal to
$$
\{x||C_x| \geq \beta\log m(n)\},
$$
where $C_x$ denotes the component of $G_n$ containing $x$.
\end{proposition}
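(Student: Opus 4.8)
The plan is to deduce Proposition~\ref{prop:giant_component_discription} by combining the compartment-spanning component produced in Proposition~\ref{prop:large_global_component} with the quantitative survival bound of Proposition~\ref{prop:component_bound_hitting_time} and the spreading bound of Proposition~\ref{prop:spreading_through_components}, glued together by a union bound over the $\sim n$ vertices of $G_n$. Fix $\tau\in(0,\tfrac16)$ once and for all, so that $\tfrac23-2\tau>0$, $\tfrac13-2\tau>0$ and $\tfrac{2}{d}\tau>0$.

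First I would fix, for each $n$, the ``giant'' component $C^\ast$ furnished by Proposition~\ref{prop:large_global_component}: with high probability there is a component $C^\ast$ with $|C^\ast\cap C_i^n|\ge\epsilon m(n)^{2/3-\tau}$ for every $i\in[k(n)]^d$. Since $m(n)^{2/3-\tau}/\log m(n)\to\infty$, on this event $|C^\ast|\ge k(n)^d\epsilon m(n)^{2/3-\tau}>\beta\log m(n)$ for $n$ large, so $C^\ast\subseteq\{x:|C_x|\ge\beta\log m(n)\}$, while trivially every $x\in C^\ast$ has $|C_x|\ge\beta\log m(n)$. Hence it suffices to show that with high probability \emph{every} vertex $x$ with $|C_x|\ge\beta\log m(n)$ lies in $C^\ast$: granting this, $\{x:|C_x|\ge\beta\log m(n)\}$ is precisely the vertex set of $C^\ast$, it is the unique component of size at least $\beta\log m(n)$, and every other component is strictly smaller, so it is the largest one.

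The heart of the argument is the union bound. For a fixed $x\in C_i^n$ I would run the exploration process of Section~\ref{subsection:exploration} from $x$ and estimate the bad event $B_x=\{|C_x|\ge\beta\log m(n)\}\cap\{C_x\ne C^\ast\}$. On $\{|C_x|\ge\beta\log m(n)\}$ one has $\sum_l\|A_l\|=|C_x|\ge\beta\log m(n)$ (each vertex of $C_x$ is active in exactly one step), so Proposition~\ref{prop:component_bound_hitting_time} applies and, except on an event of probability at most $2m(n)^{-k}$, the branching-process lower bound of Proposition~\ref{prop:exploration_lower_bound_vector} survives and hence the active set $\|A_\cdot\|$ exceeds $m(n)^{2/3}$ at some step. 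Conditionally on this, Proposition~\ref{prop:spreading_through_components} gives $|C_x\cap C_i^n|\ge\epsilon m(n)^{2/3-\tau}$ except on an event of probability at most $2\exp(-\tfrac18\epsilon^2 m(n)^{2/3-2\tau})+2e^{-2m(n)^{2\tau/d}}$. Finally, on the high-probability event that $C^\ast$ is as above, if $C_x\ne C^\ast$ then $C_x\cap C_i^n$ and $C^\ast\cap C_i^n$ are disjoint subsets of $C_i^n$, each of size at least $\epsilon m(n)^{2/3-\tau}$, whose vertices all have degree at least $1$; by Lemma~\ref{lemma:connection_disjoint_sets} (exactly as in the ``connecting local components'' paragraph preceding this proposition) the absence of an edge between them has probability at most $\exp(-C\epsilon^2 m(n)^{1/3-2\tau})$, and such an edge would force $C_x=C^\ast$. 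Summing the three bounds and then over the $\sim n$ choices of $x$, $\mathbb P(\bigcup_x B_x)$ is at most a constant times $n\,m(n)^{-k}$ plus $n$ times a stretched exponential in $m(n)$; both vanish because $n/m(n)^k\to0$ and $m(n)\ge Cn^\alpha$ respectively, which gives the proposition.

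The step I expect to be the main obstacle is making the last ingredient fully rigorous while respecting the dependence structure of the configuration model: the events ``$C_x$ occupies $\ge\epsilon m(n)^{2/3-\tau}$ of $C_i^n$'' and ``$C^\ast$ exists'' are obtained by revealing parts of the half-edge matching, so one must verify that, conditionally on this revealed information, enough half-edges incident to $C_x\cap C_i^n$ and $C^\ast\cap C_i^n$ remain unmatched for the estimate of Lemma~\ref{lemma:connection_disjoint_sets} to apply. This is the same bookkeeping already carried out in Section~\ref{subsubsection:spreading} and in the paragraph before the proposition, so it should amount to invoking that argument carefully rather than a genuinely new difficulty. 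A secondary point, entirely routine, is to confirm that each error term is indeed $o(1/n)$ after the union bound, which reduces to the two growth hypotheses on $m(n)$.
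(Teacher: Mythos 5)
Your proposal is correct and follows essentially the same route as the paper's proof: the compartment-spanning component from Proposition~\ref{prop:large_global_component}, Proposition~\ref{prop:component_bound_hitting_time} to push any component of size $\geq \beta\log m(n)$ up to size $m(n)^{2/3}$, and Lemma~\ref{lemma:connection_disjoint_sets} plus a union bound to connect it to the spanning component. The only cosmetic differences are that you take the union bound over the $\sim n$ vertices rather than over the at most $n/(\beta\log m(n))$ large components, and that you invoke Proposition~\ref{prop:spreading_through_components} to localize $C_x$ in its starting compartment (giving exponent $\tfrac13-2\tau$) where the paper pairs all $m(n)^{2/3}$ vertices of the rival component with the giant's $\epsilon m(n)^{2/3-\tau}$ vertices per compartment (exponent $\tfrac13-\tau$); both variants are harmless since every error term remains $o(1/n)$ under the assumptions $n\,m(n)^{-k}\to 0$ and $m(n)\geq Cn^{\alpha}$.
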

\begin{proof}
By Proposition \ref{prop:large_global_component} we know that $C \subset \{x||C_x| \geq \beta\log m(n)\}$ with high probability. The claim now follows once we show that 
$$
\PP\left(\{x||C_x| \geq \beta\log m(n)\} \subset C\right)
$$
goes to 1. For this, it suffices to prove that
$$
\PP(|C_x| \geq \beta\log m(n) \mbox{ and } x \notin C \mbox{ for some } x)
$$
goes to 0. Note that there are at most $\frac{n}{\beta \log m(n)}$ components of size larger than $\beta\log m(n)$. Therefore, the above probability is bounded above by
$$
\frac{n}{\beta\log m(n)}\PP\left(|\tilde C| \geq \beta\log m(n) \mbox{ and } \tilde C \cap C = \emptyset\right).
$$
By conditioning we have
\begin{align*}
\MoveEqLeft \PP\left(|\tilde C| \geq \beta\log m(n) \mbox{ and } \tilde C \cap C = \emptyset\right)
\\
&\leq 
\PP\left(|\tilde C| < m(n)^{\frac23}\middle||\tilde C| \geq \beta\log m(n)\right) + \PP\left(\tilde C \cap C = \emptyset\middle||\tilde C| \geq m(n)^{\frac23}\right).
\end{align*}

From Proposition \ref{prop:component_bound_hitting_time} it follows that
$$
\PP\left(|\tilde C| < m(n)^{\frac23}\middle||\tilde C| \geq \beta\log m(n)\right) \leq m(n)^{-k}.
$$
Furthermore, an argument similar to the proof of Lemma \ref{lemma:connection_disjoint_sets} gives us that
$$
\PP\left(\tilde C \cap C = \emptyset\middle||\tilde C| \geq m(n)^{\frac23}\right) \leq \exp\left(-Cm(n)^{\frac13-\tau}\right).
$$
Here we used that the component $C$ contains at least $\epsilon m(n)^{\frac23-\tau}$ vertices from each compartment. 

Since by assumption $\lim_{n\to\infty} nm(n)^{-k} = 0$, it follows that
$$
\lim_{n\to\infty} \frac{n}{\beta\log m(n)}\left(m(n)^{-k} + \exp\left(-Cm(n)^{\frac13-\tau}\right) \right) = 0
$$
as long as we take $\tau < \frac13$. This completes the proof.
\end{proof}

From Proposition \ref{prop:giant_component_discription} it follows that we are done once we show that
$$
\frac1n\left|\{x||C_x| \geq \beta\log m(n)\}\right| \to 1 - \rho
$$
in probability, where $\rho$ is as in Remark \ref{remark:size_biased}. For this, we first prove the following result.

\begin{proposition}
Let the assumptions of Theorem \ref{theorem:giant_component} be satisfied. Let $\rho$ be as in Remark \ref{remark:size_biased} and let $C$ be a component of $G_n$. Then
\begin{equation}\label{eq:prob_convergence_extinction}
\lim_{n\to\infty} \PP(|C| \leq \beta\log m(n)) = \rho.
\end{equation}
\end{proposition}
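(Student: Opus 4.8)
The plan is to establish \eqref{eq:prob_convergence_extinction} by sandwiching $\PP(|C|\leq\beta\log m(n))$ between quantities that both converge to $\rho$, using the exploration process of Section~\ref{subsection:exploration} together with the branching-process comparisons already developed. The lower bound on $|C|$ comes from the branching process $(S_l)_l$ of Proposition~\ref{prop:exploration_lower_bound_vector}, which stochastically dominates $|A_l|$ from below until $\delta m(n)$ vertices in some compartment are exposed; the matching upper bound on $|C|$ requires an \emph{upper} coupling of the exploration process by a Galton--Watson tree, which is standard in the configuration-model literature (see \cite{Dur07, BR15}): while only $o(m(n))$ vertices have been exposed, each vertex explored yields a number of new active vertices that is stochastically \emph{at most} $W_q^{-\epsilon}$ (a slight upward perturbation of $Z_D = D^*-1$), since the degree sequence converges by Assumption~\ref{assumption:convergent_deg_seq} and the size-biasing/depletion effects are $O(1/m(n))$. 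The geometric (compartment) constraints are irrelevant for the \emph{size} of the explored component at this stage: whether a found half-edge lands in this or a neighbouring compartment does not change the count, so the upper coupling is to the \emph{same} single-type Galton--Watson tree with offspring $D$ at the root and $Z_D$ thereafter that governs the standard configuration model, whose extinction probability is exactly $\rho$ (Remark~\ref{remark:size_biased}).

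Concretely, first I would fix $\epsilon>0$ and choose $\delta,\eta$ as in Proposition~\ref{prop:exploration_lower_bound_vector}. For the upper bound on the survival probability: couple the exploration started at a uniformly chosen vertex $x$ to a Galton--Watson tree $T^{\mathrm{up}}$ with root offspring $\approx D$ and subsequent offspring $W_q^{-\epsilon}\geq Z_D$, valid until $m(n)^{2/3}$ vertices are exposed. Then $\{|C|\le\beta\log m(n)\}$ is implied, up to an event of probability $o(1)$, by extinction of the truncation of $T^{\mathrm{up}}$ before size $\beta\log m(n)$ — here one uses Proposition~\ref{prop:component_bound_hitting_time} in the reverse direction to say that if the exploration survives past $\beta\log m(n)$ active vertices it reaches size $m(n)^{2/3}$ whp, hence past $\beta\log m(n)$. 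Letting $n\to\infty$ and then $\epsilon\to 0$, the extinction probability of $T^{\mathrm{up}}$ tends to $\rho$ by continuity of the extinction probability in the offspring generating function (the generating function of $W_q^{-\epsilon}$ converges pointwise on $[0,1]$ to that of $Z_D$, and $\EE(W_q^{-\epsilon})\to\EE(Z_D)>1$, so the largest root in $[0,1]$ of the fixed-point equation converges). This gives $\limsup_n \PP(|C|\le\beta\log m(n)) \le \rho$.

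For the lower bound, use the dominating-from-below branching process $(S_l)_l$ of Proposition~\ref{prop:exploration_lower_bound_vector}, whose total size $|S_l|$ is bounded below by a single-type Galton--Watson tree with offspring $N$ satisfying $\EE(N)>1$, where $\EE(N)=\EE(W_q^{2\eta})(1-2\gamma_\delta)$. If this lower branching process survives to size $m(n)^{2/3} \gg \beta\log m(n)$, then so does $\|A_l\|$, hence $|C|>\beta\log m(n)$; thus $\PP(|C|\le\beta\log m(n)) \le 1 - (\text{survival prob. of lower GW tree}) + o(1)$. As $\delta,\eta\to 0$ the offspring law $N$ converges to $Z_D$ (again $\EE(W_q^{2\eta})\to\EE(Z_D)$ and $\gamma_\delta\to 0$, with pointwise generating-function convergence), so the survival probability of the lower tree tends to $1-\rho$, giving $\liminf_n \PP(|C|\le\beta\log m(n)) \ge \rho$. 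Combining the two bounds yields \eqref{eq:prob_convergence_extinction}.

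\textbf{Main obstacle.} The delicate point is the exchange of limits: the couplings are only valid for $n$ large depending on $\epsilon,\delta,\eta$, while the extinction/survival probabilities only converge to $\rho$, $1-\rho$ as $\epsilon,\delta,\eta\to 0$. Making this rigorous requires a diagonal argument — choosing $\epsilon_n,\delta_n,\eta_n\to 0$ slowly enough that the coupling error terms (of order $n^{-k}$, $\exp(-cm(n)^{1/3-\tau})$, etc., from Propositions~\ref{prop:component_bound_hitting_time} and~\ref{prop:spreading_through_components}) remain $o(1)$ — together with uniform control, via the exponential moment assumption $\EE(e^{tD})<\infty$, on the convergence of the extinction probabilities of the perturbed offspring laws. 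A secondary subtlety is that the relevant $|C|$ is the component of a \emph{uniformly random} vertex, so the root offspring should be $D$ (not $D^*$); this only affects the first generation and is handled exactly as in the classical argument.
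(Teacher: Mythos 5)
The substance of your two couplings is right, but you have attached each conclusion to the wrong coupling. Stochastic domination of the exploration from \emph{above} by $T^{\mathrm{up}}$ gives (on the coupling event, which persists well beyond scale $\beta\log m(n)$) the inclusion $\{|T^{\mathrm{up}}|\le\beta\log m(n)\}\subseteq\{|C|\le\beta\log m(n)\}$, hence a \emph{lower} bound $\PP(|C|\le\beta\log m(n))\ge\PP(|T^{\mathrm{up}}|\le\beta\log m(n))\to\rho_\epsilon\le\rho$, i.e. the $\liminf\ge\rho$ half after $\epsilon\to0$; it cannot yield $\limsup\le\rho$, and invoking Proposition \ref{prop:component_bound_hitting_time} ``in the reverse direction'' does not repair this, since under an upper coupling the tree being large carries no information about $|C|$. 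Symmetrically, your second paragraph derives $\PP(|C|\le\beta\log m(n))\le 1-p_{\mathrm{surv}}(N)+o(1)$ from the lower process of Proposition \ref{prop:exploration_lower_bound_vector}: that is precisely the $\limsup\le\rho$ half (after $\delta,\eta\to0$), not a lower bound on the $\liminf$ as you state. With the two conclusions interchanged your sandwich does close, provided you additionally justify the continuity of the extinction/survival probability along the perturbed offspring laws $N_{\delta,\eta}\uparrow Z_D$ and $W_q^{-\epsilon}\downarrow Z_D$ (monotone convergence of the generating functions plus the characterization of $\rho$ as the relevant fixed point, using $\EE(Z_D)>1$), carry out the diagonalization over $\epsilon,\delta,\eta$ you mention, and treat the root generation with law close to $D$ rather than $Z_D$.

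For comparison, the paper's proof is lighter on exactly the half where your argument is heaviest. It couples the exploration (ignoring compartments) from above with the \emph{unperturbed} tree $(Z_t)$, root offspring $D$ and offspring $Z_D$ thereafter, so that $|R_\tau|\le S_T$ always; since $\PP(S_T\le\beta\log m(n))\to\PP(T<\infty)=\rho$, this gives $\liminf_n\PP(|C|\le\beta\log m(n))\ge\rho$ with no perturbation parameter. For the reverse inequality it does not use the multitype lower process at all: it shows that the probability of any collision among the first $\beta\log m(n)$ exposed vertices is of order $(\log m(n))^2/m(n)=o(1)$, so the exploration and the idealized tree coincide on that scale, whence $\PP(S_T>\beta\log m(n))\le\PP(|C|>\beta\log m(n))+o(1)$ and $\limsup_n\PP(|C|\le\beta\log m(n))\le\rho$. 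This collision estimate sidesteps both the continuity-of-extinction-probability argument and the exchange of limits that you identify as the main obstacle, so once you have swapped the roles of your two bounds you may still prefer to replace the lower-process half by this cheaper argument.
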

\begin{proof}
Define $Z_t$ by
$$
Z_t = Z_0 + \sum_{i=1}^t X_i
$$
where $Z_0$ has distribution $D$ and $X_1,\ldots,X_t$ are i.i.d. with distribution $Z_D$ as in Remark \ref{remark:size_biased}. Furthermore, define $S_t = \sum_{i=0}^t Z_i$. Suppose we explore the component $C$ using the exploration process $(R_t,A_t,U_t)$, where we do not track to which compartment the vertices belong. Since during the exploration we might have collisions, we find that $|A_t| \leq Z_t$ and $|R_t| \leq S_t$. Define
$$
\tau := \inf\{t|A_t = \emptyset\}
$$
and 
$$
T := \inf\{t|Z_t = 0\}.
$$

Since $|A_t| \leq Z_t$ we have $\tau \leq T$. Furthermore, note that $\PP(|C| \leq \beta\log m(n)) = \PP(|R_\tau| \leq \beta\log m(n))$. Because $|R_t| \leq S_t$, it follows that
$$
\PP(|R_\tau| \leq \beta\log m(n)) \geq \PP(S_T \leq \beta\log m(n)).
$$
It holds that
$$
\lim_{n\to\infty} \PP(S_T \leq \beta\log m(n)) = \PP(T < \infty) = \rho.
$$
Indeed, since $S_t$ increases by at least 1 in every step, we have
$$
\PP(S_T \leq \beta\log m(n)) \leq \PP(T \leq \beta\log m(n)) \to \rho.
$$
On the other hand, note that
\begin{align*}
\PP(S_T \leq \beta\log m(n)) 
&\geq 
\PP(S_T \leq \beta\log m(n)|T \leq \tilde\beta\log m(n))\PP(T \leq \tilde\beta\log m(n))
\\
&= 
(1 - o(1))\PP(T \leq \tilde\beta\log m(n))
\\
&\to \rho
\end{align*}
provided $\tilde\beta$ is small enough. This can for example be proven using Chernoff's bound.

We conclude that
\begin{equation}\label{eq:liminf}
\liminf_{n\to\infty} \PP(|R_\tau| \leq \beta\log m(n)) \geq \rho.
\end{equation}

For the reverse inequality, we write
\begin{align*}
\MoveEqLeft \PP(S_T > \beta\log m(n))
\\
&= \PP(S_T > \beta\log m(n), S_t = |R_t| \mbox{ for all } t \leq \beta\log m(n))
\\
&\qquad + \PP(S_T > \beta\log m(n), S_t > |R_t| \mbox{ for some } t \leq \beta\log m(n)).
\end{align*}

Note that
$$
\PP(S_T > \beta\log m(n), S_t = |R_t| \mbox{ for all } t \leq \beta\log m(n)) \leq \PP(|R_{\tau}| > \beta\log m(n)).
$$

We will show that the second term vanishes. For this, observe that it suffices to prove that the probability of a collision before $\beta\log m(n)$ vertices are exposed vanishes. For this, observe that if we explore half-edge $t \leq \beta\log m(n)$, then there are at most $\beta \log m(n)$ vertices it can attach to to form a collision. Since the degree sequence converges, there exists a constant $C > 0$ such that with high probability there are at most $C\log m(n)$ half-edges that lead to a collision. For the same reason, there are at least $c m(n)$ half-edges to choose from in total. Therefore, the probability of causing a collision when exploring half-edge $t$ is at most $\tilde C\frac{\log m(n)}{m(n)}$. It follows that the probability of a collision in the first $\beta\log m(n)$ exploration steps is of order $\frac{(\log m(n))^2}{m(n)}$, which tends to 0. 

Collecting everything, we find that
$$
\PP(S_T > \beta\log m(n)) \leq \PP(|R_{\tau}| > \beta\log m(n)) + o(1),
$$
from which it follows that
$$
\PP(|R_{\tau}| \leq \beta\log m(n)) \leq \PP(S_T \leq \beta\log m(n)) + o(1).
$$

From this we conclude that
$$
\limsup_{n\to\infty} \PP(|R_{\tau}| \leq \beta\log m(n)) \leq \lim_{n\to\infty} \PP(S_T \leq \beta\log m(n)) = \rho.
$$

Together with the inequality in \eqref{eq:liminf} this completes the proof.
\end{proof}

\subsection{Proof of Theorem \ref{theorem:giant_component}}

With all preparations done, we are finally ready to prove Theorem \ref{theorem:giant_component}.

\begin{proof}[Proof of Theorem \ref{theorem:giant_component}]

From Proposition \ref{prop:giant_component_discription} it follows that we are done once we show that
$$
\lim_{n\to\infty} \frac1n|\{x||C_x| \geq \beta\log m(n)\}| = 1 - \rho
$$
in probability.

To this end, define for $x \in V_n$ the random variables $Y_x^n$, where $Y_x^n = 1$ if $|C_x| \leq \beta \log m(n)$ and 0 otherwise. Then
$$
|\{x||C_x| \leq \beta\log m(n)\}| = \sum_{x = 1}^{k(n)^dm(n)} Y_x.
$$
Note that by \eqref{eq:prob_convergence_extinction} we have
$$
\lim_{n\to\infty} \EE(Y_x^n) = \lim_{n\to\infty} \PP(|C_x| \leq \beta\log m(n)) = \rho.
$$
Therefore, we find that
\begin{multline}
\lim_{n\to\infty} \PP\left(\left|\frac1n|\{x||C_x| \geq \beta\log m(n)\}| - \rho\right| > \epsilon\right) \\
= \lim_{n\to \infty} \PP\left(\left|\sum_{x = 1}^{k(n)^dm(n)} Y_x - k(n)^dm(n)\rho\right| \geq n\epsilon\right).
\end{multline}

By Chebyshev's inequality, we have
$$
\PP\left(\left|\sum_{x = 1}^{k(n)^dm(n)} Y_x - k(n)^dm(n)\rho\right| \geq n\epsilon\right) \leq \frac{\Var\left(\sum_{x=1}^{k(n)^dm(n)} Y_x\right)}{\epsilon^2n^2}.
$$

Note that 
\begin{multline}
\Var\left(\sum_{x=1}^{k(n)^dm(n)} Y_x\right)
\\
\leq k(n)^dm(n) + \sum_{x\neq y} \Cov(Y_x,Y_y) \leq k(n)^dm(n) + k(n)^{2d}m(n)^2\Cov(Y_1,Y_2).
\end{multline}
We can compute
$$
\Cov(Y_1,Y_2) = \PP(Y_1 = 1,Y_2 = 1) - \PP(Y_1 = 1)\PP(Y_2 = 1).
$$
To estimate this, we consider two independent exploration processes starting at vertex 1 and 2 where we couple them once they meet. Following a reasoning similar to \cite[Lemma 2.3.4]{Dur07}, we find that 
$$
\Cov(Y_1,Y_2) \leq \frac{C(\beta \log m(n))^2}{m(n)}
$$
for some $C > 0$.

Altogether, we obtain
$$
\Var\left(\sum_{x=1}^{k(n)m(n)} Y_x\right) \leq Ck(n)^{2d}m(n)(\log m(n))^2
$$
for some (possibly different) constant $C > 0$. Plugging this into the equation above and using that $\lim_{n\to\infty} \frac{k(n)^dm(n)}{n} = 1$ and $\lim_{n\to\infty} m(n) = \infty$, we find that
$$
\lim_{n\to\infty} \PP\left(\left|\sum_{x = 1}^{k(n)^dm(n)} Y_x - k(n)^dm(n)\rho\right| \geq n\epsilon\right) = 0.
$$
Putting everything together, we obtain
$$
\frac1n|\{x||C_x| \leq \beta\log m(n)\}| \to \rho
$$
in probability, which implies that
$$
\frac1n|\{x||C_x| \geq \beta\log m(n)\}| \to 1 - \rho
$$
in probability as desired.
\end{proof}

\section{Difference with standard configuration model} \label{section:counterexample}

We conclude by considering an example to see the difference between the compartment model on torus and the standard configuration model. This example also shows that some condition on the number of vertices $m(n)$ per compartment is necessary when we want to keep the conditions on the degree sequence in line with the standard configuration model.\\

For our example, let $D$ be random variable taking values in the non-negative integers. Assume that $\PP(D \leq 1) = p > 0$ and $\EE(D(D-2)) > 0$. Let $d_n$ be a degree sequence on $n$ vertices converging to $D$ in the sense of Assumption \ref{assumption:convergent_deg_seq} (without the compartments). Let $G(d_n)$ be the random graph obtain from the standard configuration model on $n$ vertices with degree sequence $d_n$. Then (see e.g. \cite{BR15})
$$
\lim_{n\to\infty} \frac{L_1(G(d_n))}{n} = 1 - \rho
$$
in probability, where $\rho$ is the extinction probability of the Galton-Watson tree associated to $D$ as in Remark \ref{remark:size_biased}. In particular, because $\EE(D(D-2)) > 0$ it holds that $\rho < 1$. We thus see that with high probability the graph $G(d_n)$ contains a giant component.\\

We will now prove that under the same conditions, the compartment model on the torus does not contain a giant component with high probability if we assume the compartment contain a fixed number of vertices. This is caused only by the assumption that $\PP(D \leq 1) > 0$. The result below considers the circle ($d=1$). Afterwards, we will remark how this may be extended to higher dimensions.

\begin{proposition}\label{prop:counter_example}
Let $D$ be a random variable taking values in the non-negative integers such that $\PP(D \leq 1) = p > 0$. Let $d_n$ be a degree sequence sampled independently and uniformly from $D$. Let $G_n$ be the compartment model on the circle (i.e. $d = 1$) with degree sequence $d_n$ and assume that $m(n) = \lambda \geq 1$ for all $n$. Then 
$$
\lim_{n\to\infty} \frac{L_1(G_n)}{n} = 0
$$
in probability.
\end{proposition}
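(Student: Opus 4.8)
The plan is to show that when compartments have a fixed size $\lambda$, the torus $\ZZ_{k(n)}$ is broken into many small pieces by "blocking" compartments that cannot pass a connected path, and hence every component is confined to a bounded stretch of compartments. First I would say that a compartment $C_i^n$ is \emph{bad} if every one of its $\lambda$ vertices has degree at most $1$; since the degrees are sampled i.i.d.\ from $D$ and $\PP(D\le 1)=p>0$, each compartment is bad independently with probability $p^\lambda =: q > 0$. The key structural observation is that a bad compartment blocks long-range connectivity: if every vertex in $C_i^n$ has degree $\le 1$, then no path in $G_n$ can \emph{enter} $C_i^n$ from compartment $C_{i-1}^n$ and \emph{leave} it to compartment $C_{i+1}^n$, because traversing such a path would require some vertex of $C_i^n$ to have degree $\ge 2$ (one half-edge toward each side). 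Hence any connected component that meets $C_i^n$ either stays inside the block of consecutive non-bad compartments delimited by the nearest bad compartments on each side, or is contained in a bad compartment itself (where it has size $\le$ something trivial).

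The next step is a standard renewal/geometric-gap estimate on the circle $\ZZ_{k(n)}$. Because bad compartments occur independently with probability $q$, the maximal run of consecutive non-bad compartments has length $O_{\PP}(\log k(n))$; more precisely, for any $\epsilon>0$, with high probability every maximal non-bad run has length at most $C\log n$ for a suitable constant $C = C(q)$. (One bounds $\PP(\text{some run of length}\ge \ell) \le k(n)(1-q)^{\ell}$ and chooses $\ell = C\log n$ so that this tends to $0$, using $k(n)\le n$.) Consequently, with high probability every connected component of $G_n$ is contained within a block of at most $C\log n$ consecutive compartments, hence has at most $\lambda\, C\log n$ vertices. Therefore $L_1(G_n)\le \lambda C\log n$ with high probability, and dividing by $n$ gives $L_1(G_n)/n\to 0$ in probability.

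The only subtlety—and the step I'd be most careful about—is the \emph{blocking} claim, i.e.\ verifying rigorously that a bad compartment genuinely disconnects the two sides of the torus in $G_n$. One must handle edges \emph{within} a bad compartment and edges between a bad compartment and its two neighbours: a vertex of degree $1$ in $C_i^n$ contributes at most a single edge, so it can be adjacent to at most one other vertex (possibly in $C_{i-1}^n$, in $C_i^n$, or in $C_{i+1}^n$), and a vertex of degree $0$ is isolated; thus the bipartite "cut" of edges crossing from $\{C_j^n : j\le i\}$ to $\{C_j^n: j > i\}$ uses only the $\le \lambda$ half-edges sitting in $C_i^n$, none of which can simultaneously serve both the left crossing and the right crossing. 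Formalizing that no path can use $C_i^n$ as a "through-station" when $\deg\le 1$ there is elementary graph theory but should be written out; everything after that is routine. One should also note at the end the remark promised in the text, that the same argument works for $d\ge 2$ by declaring a hyperplane-slab of compartments "bad" when all its vertices have degree $\le 1$ and observing such a slab still separates the torus, though the independence structure and the probability bound $q$ change accordingly.
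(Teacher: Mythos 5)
Your proposal is correct and follows essentially the same route as the paper: declare a compartment blocking when all $\lambda$ of its vertices have degree at most $1$ (probability $p^\lambda$, independently across compartments), note that such a compartment cannot be traversed by a path, and bound $L_1(G_n)$ by $\lambda$ times the maximal gap between blocking compartments. The only difference is cosmetic: you bound the maximal run by $C\log n$ via a union bound, while the paper is content with the weaker bound $\epsilon n$ obtained from the maximum of $n$ geometric variables, which already suffices for $L_1(G_n)/n \to 0$.
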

\begin{proof}
Observe that if all vertices in a compartment have degree 0 or 1, then no component can cross this compartment. As a consequence, the size of components is bounded by the maximum distance between such compartments multiplied by $\lambda$. 

Note that with probability $p^\lambda$ a compartment contains only degree 0 or 1 vertices. Since for different compartments these events are independent, the distance between such compartments is geometrically distributed with parameter $p^\lambda$. Moreover, since $\lambda \geq 1$, we have at most $n$ such intervals.

Let $X_1,\ldots,X_n$ be independent random variables with a geometric distribution with parameter $p^\lambda$. By the above, it follows that the size of the largest component is bounded by $\max_{i=1}^n X_i$. 

Now let $\epsilon > 0$. Then 
$$
\PP\left(\max_{i=1}^n X_i \leq \epsilon n\right) = \prod_{i=1}^n \PP(X_i \leq \epsilon n) = \left(1 - (1 - p^\lambda)^{\epsilon n}\right)^n
$$

Now,
$$
\lim_{n\to\infty} \left(1 - (1 - p^\lambda)^{\epsilon n}\right)^n = 1.
$$
To see this, note that
$$
\log\left(\left(1 - (1 - p^\lambda)^{\epsilon n}\right)^n\right) = n\log\left(1 - (1 - p^\lambda)^{\epsilon n}\right) \approx n(1 - p^\lambda)^{\epsilon n},
$$
which goes to 0 since $0 < (1 - p^\lambda)^\epsilon < 1$.

Using the above, we find that
$$
\lim_{n\to\infty} \PP\left(\max_{i=1}^n X_i \leq \epsilon n\right) = 1
$$
Collecting everything, it follows that
\begin{align*}
    \lim_{n\to\infty} \PP\left(\frac{L_1(G_n)}{n} > \epsilon \right)
    &\leq
   \lim_{n\to\infty} \PP\left(\max_{i=1}^n X_i > n\epsilon \lambda^{-1} \right)
   \\
   &=
   1 - \lim_{n\to\infty} \PP\left(\max_{i=1}^n X_i \leq n\epsilon\lambda^{-1} \right)
   \\
   &=
   0.
\end{align*}
We conclude that
$$
\lim_{n\to\infty} \frac{L_1(G_n)}{n} = 0
$$
in probability.
\end{proof}

The result of Proposition \ref{prop:counter_example} remains true in higher dimensions, at least under the additional assumption that $p^\lambda$ (i.e., the probability of a compartment with only vertices of degree at most 1) is sufficiently large. The reasoning makes use of the phase transition in site-percolation on the lattice $(\ZZ/k(n)\ZZ)^d$. Indeed, the vertices in $(\ZZ/k(n)\ZZ)^d$ represent compartments. Since components in the compartment model cannot cross compartments with only degree 1 vertices, they are  restricted to compartments that form components in $(\ZZ/k(n)\ZZ)^d$ after removing sites with probability $p^\lambda$. If this is sufficiently large, then only components of size $\log(k(n))$ remain with high probability. In that case, the components in the compartment model have at most size $m(n)\log(k(n))$, which is $o(n)$. Hence, if $p^\lambda$ is large enough, the associated compartment model on the torus does not have a giant component.

\begin{remark}\label{remark:compartment_size_percolation}
The result in Proposition \ref{prop:counter_example} can actually be extended to slightly larger compartment sizes (as long as $d = 1$). Indeed, the same reasoning also works when $m(n) = \lambda \log n$ as long as $\lambda < -\frac{1}{\log p}$. This proves that at least for some degree distributions $D$ (and for $d = 1$) it is actually necessary for $m(n)$ to tend to infinity in order to see a giant component. This also underpins the idea that there is an interplay between the assumptions on the degree sequence and compartment for the emergence of a giant component. 
\end{remark}

Remark \ref{remark:compartment_size_percolation} suggest that there is a phase transition in the behaviour of the giant component in the compartment model on a circle depending on the size of the compartments. In particular, we have the following conjecture.

\begin{conjecture}\label{conjecture:phase_transition}
Let $D$ be a random variable taking values in the non-negative integers such that $\PP(D \leq 1) = p > 0$. Let $d_n$ be a degree sequence sampled independently and uniformly from $D$. Let $G_n$ be the compartment model on the circle (i.e. $d = 1$) with degree sequence $d_n$ and assume that $m(n) = \lambda\log n$ for all $n$. Then there exists a critical value $\lambda^*$ at which a phase transition occurs in the existence of a giant component in graph $G_n$.
\end{conjecture}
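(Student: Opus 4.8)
The natural candidate for the critical value is $\lambda^* = -1/\log p = 1/\log(1/p)$, and the plan is to prove two matching one-sided statements around it. Write $\lambda^*_- := \sup\{\lambda : L_1(G_n)/n \to 0 \text{ in probability}\}$ and $\lambda^*_+ := \inf\{\lambda : \text{with high probability } L_1(G_n) \geq cn \text{ for some fixed } c>0\}$. The first step is to show $\lambda^*_- \geq \lambda^* > 0$, the second that $\lambda^*_+ < \infty$ (for which one must additionally impose $\EE(D(D-2))>0$, as in Theorem \ref{theorem:giant_component} — without it the standard configuration model has no giant component and neither does $G_n$ for any $\lambda$, so the statement is vacuous), and the last — and genuinely delicate — point is that $\lambda^*_- = \lambda^*_+$.

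The subcritical side is the quantitative version of the argument sketched after Proposition \ref{prop:counter_example} and in Remark \ref{remark:compartment_size_percolation}. Call a compartment \emph{blocking} if all $m(n)$ of its vertices have degree at most $1$; since the degrees are i.i.d.\ from $D$ and distinct compartments use disjoint vertex sets, distinct compartments are blocking independently, each with probability $q_n = p^{m(n)} = n^{-\lambda/\lambda^*}$. For $\lambda < \lambda^*$ one has $k(n) q_n = \frac{1}{\lambda\log n}\,n^{1-\lambda/\lambda^*} \to \infty$, so a standard maximal-spacing estimate for i.i.d.\ Bernoulli marks on the cycle $\ZZ_{k(n)}$ shows that, with high probability, every arc between consecutive blocking compartments contains at most $C q_n^{-1}\log k(n) = C' n^{\lambda/\lambda^*}\log n$ compartments. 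No component of $G_n$ can cross a blocking compartment (no vertex there has two neighbours), so $L_1(G_n) \leq m(n)\cdot C' n^{\lambda/\lambda^*}\log n = O\big(n^{\lambda/\lambda^*}(\log n)^2\big) = o(n)$ because $\lambda/\lambda^* < 1$. This gives $\lambda^*_- \geq \lambda^*$.

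The supercritical side is where the real work is, and where the proof of Theorem \ref{theorem:giant_component} does not transfer: that proof is tuned to $m(n) \geq Cn^\alpha$, and several of its union bounds — the factor $k(n)^d m(n)^{-k}$ handled in Proposition \ref{prop:large_global_component}, and the quantity $1-a_n$ in Proposition \ref{prop:existance_local_large_component} — decay only polylogarithmically in $n$ once $m(n)=\lambda\log n$, far too slowly to survive a union bound over the $k(n)\asymp n/\log n$ compartments. What is needed instead is that the probability that a single exploration started in a fixed compartment fails to produce a locally-large component (and that neighbouring local components fail to connect) decays like $e^{-c\,m(n)} = n^{-c\lambda}$ with $c>0$ uniform in $\lambda$; then, taking $\lambda$ with $c\lambda>1$, the union bound closes and the local-to-global argument of Section \ref{section:proof} yields a component meeting every compartment, hence of linear size. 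I expect obtaining this exponential-in-$m(n)$ decay to be the main obstacle: the present route loses because the exploration's active set needs $\beta\log m(n)$ individuals to take off, the number of restarts available before $\delta m(n)$ vertices are spent is only $\asymp m(n)/\log m(n)$, and each restart succeeds with a fixed probability $p<1$ (coming from the survival probability of the lower-bound branching process in Proposition \ref{prop:exploration_lower_bound_vector}), giving the insufficient bound $(1-p)^{m(n)/\log m(n)} = n^{-c\lambda/\log\log n}$. Overcoming this likely requires a different estimate altogether — for instance exploring all compartments in a $\sqrt{\log m(n)}$-ball simultaneously and coupling with the supercritical multitype branching process so that reaching size linear in $m(n)$ has failure probability exponentially small in $m(n)$, or a direct second-moment/sprinkling argument on the set of vertices lying in components of size $\geq \beta\log m(n)$. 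One also has to cope with the fact that Assumption \ref{assumption:convergent_deg_seq} fails to hold uniformly over compartments when $\lambda$ is small, which is a further reason $\lambda$ must be taken large here.

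Finally, even granting both one-sided bounds, promoting them to a single threshold $\lambda^*$ rather than an interval $[\lambda^*_-,\lambda^*_+]$ is a separate matter: since $n$ is fixed while both $m(n)=\lambda\log n$ and $k(n)\approx n/(\lambda\log n)$ depend on $\lambda$, there is no obvious monotone coupling of the models in $\lambda$. One would therefore either construct such a coupling (e.g.\ by merging or splitting compartments in a degree-preserving way) or prove the sharper dichotomy that $L_1(G_n)/n\to 0$ for every $\lambda<\lambda^*$ while $L_1(G_n)/n$ is bounded below by a positive constant with high probability for every $\lambda>\lambda^*$, by pushing the estimates of the previous paragraph right up to $\lambda=\lambda^*$ — which is exactly the quantitatively hardest point and the reason this is stated as a conjecture.
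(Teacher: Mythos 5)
You are attempting to prove a statement that the paper itself leaves open: Conjecture \ref{conjecture:phase_transition} is stated without proof, supported only by the one-sided heuristic in Proposition \ref{prop:counter_example} and Remark \ref{remark:compartment_size_percolation}. Your subcritical half is sound and is essentially the paper's own argument made quantitative: with $m(n)=\lambda\log n$ and i.i.d.\ degrees, a compartment is ``blocking'' with probability $p^{m(n)}=n^{-\lambda/\lambda^*}$ (with $\lambda^*=-1/\log p$), blocking events in distinct compartments are independent, no component crosses a blocking compartment, and the maximal gap estimate on the cycle gives $L_1(G_n)=O(n^{\lambda/\lambda^*}(\log n)^2)=o(n)$ for $\lambda<\lambda^*$. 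This reproduces, with explicit bounds, exactly what Remark \ref{remark:compartment_size_percolation} asserts, so up to this point you are aligned with the paper.

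The genuine gap is everything beyond that, and you name it yourself: you do not prove a supercritical statement for any finite $\lambda$, nor the sharpness needed to speak of a single critical value $\lambda^*$. Your diagnosis of why the proof of Theorem \ref{theorem:giant_component} does not transfer is accurate --- with $m(n)=\lambda\log n$ the error terms in Propositions \ref{prop:existance_local_large_component} and \ref{prop:large_global_component} (e.g.\ $k(n)^d m(n)^{-k}$ and $(1-p)^{\delta m(n)/(\beta\log m(n))}\approx n^{-c\lambda/\log\log n}$) no longer beat the union bound over $k(n)\asymp n/\log n$ compartments, and Assumption \ref{assumption:convergent_deg_seq} itself only holds uniformly over compartments via a large-deviation bound $n^{-c(\epsilon)\lambda}$, i.e.\ for $\lambda$ large --- but the proposed remedies (failure probability $e^{-cm(n)}$ per compartment, sprinkling or second-moment arguments, a monotone coupling in $\lambda$ to collapse $[\lambda^*_-,\lambda^*_+]$ to a point) are only sketched, not executed. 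So what you have established is $\lambda^*_-\geq -1/\log p$, which the paper already claims; the existence of a phase transition, which is the actual content of the conjecture, remains unproven in your proposal just as it does in the paper. One smaller point worth recording: as stated the conjecture needs a supercriticality hypothesis such as $\EE(D(D-2))>0$ (otherwise there is no giant component for any $\lambda$ and no transition), which is consistent with the paper's own caveat following the conjecture that it should not hold for general degree distributions; making that hypothesis explicit is a necessary first step in any genuine proof.
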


It should be noted that Conjecture \ref{conjecture:phase_transition} will likely not hold for general degree distributions, and relies on the assumption that $\PP(D \leq 1) > 0$. As mentioned in Remark \ref{remark:compartment_size_percolation}, we expect that there is intricate interplay between assumptions on the degree sequence and compartment size in order to see a giant component. Therefore, any potential critical size of $m(n)$ is likely to rely on properties of the degree distribution $D$.  

\begin{remark}[Percolation]
As a consequence of the results in this section, we find that percolation for the compartment model on the circle with fixed size compartments looks very unusual. Indeed, if we independently keep edges with probability $p < 1$, then the probability that a vertex in the resulting graph has degree at most 1 is greater than 0. The argument above then shows that this graph does not have a giant component. Therefore,  the percolation threshold is $p^* = 1$.  
\end{remark}

\appendix

\section{Appendix: Concentration inequalities} \label{section:concentration}

In this appendix we obtain the vector-valued extension of the classical result on concentration inequalities by McDiarmid. This is a special case of the results in \cite{KXBS21}. Since we do not need such generality, we state McDiarmid's theorem (\cite{McD89}) for completeness and derive the vector-valued extension from this.

\begin{theorem}\label{theorem:McDiarmid}
Let $f:\RR^n \to \RR$ be a function and $X_1,\ldots,X_n$ independent real-valued random variables. Let $c_1,\ldots,c_n$ be constants such that
$$
\sup_{x_1,\ldots,x_i,x_i',\ldots,x_n} f(x_1,\ldots,x_i,\ldots,x_n) - f(x_1,\ldots,x_i',\ldots,x_n) \leq c_i.
$$
Then for every $\epsilon > 0$ we have
$$
\PP(|f(X_1,\ldots,X_n) - \EE(f(X_1,\ldots,X_n))| \geq \epsilon) \leq 2\exp\left(-\frac{\epsilon^2}{2\sum_{i=1}^n c_i^2}\right).
$$
\end{theorem}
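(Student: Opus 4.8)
The plan is to use the classical martingale (Azuma--Hoeffding) argument via the Doob martingale associated with $f(X_1,\ldots,X_n)$. Write $\Ff_k = \sigma(X_1,\ldots,X_k)$ for $0 \le k \le n$ and set $V_k = \EE\bigl(f(X_1,\ldots,X_n)\mid\Ff_k\bigr)$, so that $V_0 = \EE f(X_1,\ldots,X_n)$ and $V_n = f(X_1,\ldots,X_n)$; this is a martingale with increments $\Delta_k = V_k - V_{k-1}$ satisfying $\EE(\Delta_k\mid\Ff_{k-1}) = 0$. The first and most delicate step is to show that each increment has bounded conditional range: there is an $\Ff_{k-1}$-measurable random variable $L_k$ with $L_k \le \Delta_k \le L_k + c_k$ almost surely. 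For this one uses independence of the $X_i$: defining $g_k(x_1,\ldots,x_{k-1},x) := \EE\, f(x_1,\ldots,x_{k-1},x,X_{k+1},\ldots,X_n)$, independence gives $V_k = g_k(X_1,\ldots,X_k)$ and $V_{k-1} = \EE\bigl(g_k(X_1,\ldots,X_{k-1},X_k)\mid\Ff_{k-1}\bigr)$, so both $V_k$ and $V_{k-1}$ lie in the (random, $\Ff_{k-1}$-measurable) interval $\bigl[\inf_x g_k(X_1,\ldots,X_{k-1},x),\ \sup_x g_k(X_1,\ldots,X_{k-1},x)\bigr]$. The bounded-differences hypothesis, applied pointwise in the integrated-out variables $X_{k+1},\ldots,X_n$ and then averaged, yields $|g_k(x_1,\ldots,x_{k-1},x) - g_k(x_1,\ldots,x_{k-1},x')| \le c_k$, so this interval has length at most $c_k$, giving the claim.

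The second step is the exponential moment bound. By Hoeffding's lemma, a random variable $Y$ with $\EE(Y\mid\Ff_{k-1}) = 0$ whose conditional support has diameter at most $c_k$ satisfies $\EE\bigl(e^{\lambda Y}\mid\Ff_{k-1}\bigr) \le e^{\lambda^2 c_k^2/8}$ for every real $\lambda$. Applying this to $\Delta_k$ and iterating via the tower property gives
$$
\EE\bigl(e^{\lambda(V_n - V_0)}\bigr) = \EE\Bigl(e^{\lambda(V_{n-1}-V_0)}\,\EE\bigl(e^{\lambda\Delta_n}\mid\Ff_{n-1}\bigr)\Bigr) \le \cdots \le \exp\Bigl(\tfrac{\lambda^2}{8}\sum_{k=1}^n c_k^2\Bigr).
$$

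The final step is a Chernoff bound: for $\epsilon > 0$ and $\lambda > 0$, Markov's inequality gives $\PP\bigl(f(X_1,\ldots,X_n) - \EE f \ge \epsilon\bigr) \le e^{-\lambda\epsilon}\exp\bigl(\tfrac{\lambda^2}{8}\sum_k c_k^2\bigr)$, and choosing $\lambda = 4\epsilon/\sum_k c_k^2$ yields $\exp\bigl(-2\epsilon^2/\sum_k c_k^2\bigr)$. Since the hypothesis is symmetric under interchanging $x_i$ and $x_i'$, the function $-f$ has the same bounded differences $c_i$, so the same argument bounds the lower tail; adding the two tail estimates produces the stated two-sided inequality with the factor $2$. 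The main obstacle is the first step: justifying the conditional bounded-range property of the Doob increments from the bounded-differences hypothesis — here the independence of the coordinates is essential, as without it the martingale increments need not have small range and the conclusion fails.
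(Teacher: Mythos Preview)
Your argument is the standard Doob-martingale/Azuma--Hoeffding proof of McDiarmid's inequality and is correct. Note, however, that the paper does not actually prove this theorem: it is quoted from \cite{McD89} ``for completeness'' and used as a black box to derive the vector-valued version in Theorem~\ref{theorem:McDiarmid_vector}. So there is no ``paper's own proof'' to compare against; you have supplied a proof where the paper simply cites one.

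One minor observation: your optimisation in the Chernoff step yields the sharper exponent $-2\epsilon^2/\sum_i c_i^2$, which is the usual constant in McDiarmid's inequality. The version stated in the paper carries the weaker exponent $-\epsilon^2/(2\sum_i c_i^2)$, a factor of four off. Your bound of course implies the stated one, so this is not a defect in your argument but rather a slightly suboptimal constant in the paper's formulation (harmless for its applications).
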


We will prove a similar estimate when $F$ is vector-valued. For $x \in \RR^d$ we denote by $||x||_\infty$ the sup-norm of $x$, i.e.,
$$
||x||_\infty = \max_{i=1,\ldots,n} |x_i|.
$$
We obtain the following extension of McDiarmid's theorem.

\begin{theorem}\label{theorem:McDiarmid_vector}
Let $F:\RR^n \to \RR^d$ be a function and $X_1,\ldots,X_n$ independent, real-valued random variables. Let $c_1,\ldots,c_n$ be constants such that
$$
\sup_{x_1,\ldots,x_i,x_i',\ldots,x_n} ||F(x_1,\ldots,x_i,\ldots,x_n) - F(x_1,\ldots,x_i',\ldots,x_n)||_\infty \leq c_i.
$$
Then for every $\epsilon > 0$ we have
$$
\PP(||F(X_1,\ldots,X_n) - \EE(F(X_1,\ldots,X_n))||_\infty \geq \epsilon) \leq 2d\exp\left(-\frac{\epsilon^2}{2\sum_{i=1}^n c_i^2}\right).
$$
\end{theorem}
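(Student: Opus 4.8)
The plan is to reduce the vector-valued statement to the scalar McDiarmid inequality (Theorem \ref{theorem:McDiarmid}) by a union bound over the $d$ coordinates. Write $F = (F_1,\ldots,F_d)$, so that each $F_j:\RR^n \to \RR$ is a scalar function of the independent variables $X_1,\ldots,X_n$.

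First I would observe that the bounded-differences hypothesis on $F$ with respect to the sup-norm transfers to each coordinate: for every $j \in \{1,\ldots,d\}$ and every $i$,
$$
\sup_{x_1,\ldots,x_i,x_i',\ldots,x_n} |F_j(x_1,\ldots,x_i,\ldots,x_n) - F_j(x_1,\ldots,x_i',\ldots,x_n)| \leq \sup_{x_1,\ldots,x_i,x_i',\ldots,x_n} \|F(\ldots,x_i,\ldots) - F(\ldots,x_i',\ldots)\|_\infty \leq c_i,
$$
since a coordinate of a vector is bounded in absolute value by the sup-norm of that vector. Hence each $F_j$ satisfies the hypotheses of Theorem \ref{theorem:McDiarmid} with the same constants $c_1,\ldots,c_n$, giving
$$
\PP\bigl(|F_j(X_1,\ldots,X_n) - \EE(F_j(X_1,\ldots,X_n))| \geq \epsilon\bigr) \leq 2\exp\left(-\frac{\epsilon^2}{2\sum_{i=1}^n c_i^2}\right)
$$
for each $j$.

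Next I would note that $\|F - \EE(F)\|_\infty \geq \epsilon$ holds if and only if $|F_j - \EE(F_j)| \geq \epsilon$ for some $j \in \{1,\ldots,d\}$, where I use that expectation commutes with taking coordinates, i.e. $\EE(F)_j = \EE(F_j)$. A union bound over the $d$ coordinates then yields
$$
\PP\bigl(\|F(X_1,\ldots,X_n) - \EE(F(X_1,\ldots,X_n))\|_\infty \geq \epsilon\bigr) \leq \sum_{j=1}^d \PP\bigl(|F_j - \EE(F_j)| \geq \epsilon\bigr) \leq 2d\exp\left(-\frac{\epsilon^2}{2\sum_{i=1}^n c_i^2}\right),
$$
which is exactly the claimed bound. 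There is no real obstacle here; the only point requiring a line of care is the transfer of the bounded-differences condition from the sup-norm to individual coordinates, and the observation that the event $\{\|\cdot\|_\infty \geq \epsilon\}$ decomposes as a union of coordinate events — both are immediate from the definition of the sup-norm. The factor $d$ in the bound is precisely the cost of this union bound.
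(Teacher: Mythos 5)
Your proof is correct and follows essentially the same route as the paper: apply the scalar McDiarmid inequality (Theorem \ref{theorem:McDiarmid}) to each coordinate $F_j$, noting that the sup-norm bounded-differences condition implies the coordinatewise one, and then take a union bound over the $d$ coordinates to obtain the factor $2d$. Nothing is missing.
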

\begin{proof}
For every $j = 1,\ldots,d$ we can apply Theorem \ref{theorem:McDiarmid} to $F_j:\RR^n \to \RR$, the $j$-th component of $F$. This gives us that
$$
\PP(|F_j(X_1,\ldots,X_n) - \EE(F_j(X_1,\ldots,X_n))| \geq \epsilon) \leq 2\exp\left(-\frac{\epsilon^2}{2\sum_{i=1}^n c_i^2}\right).
$$
We can now estimate
\begin{align*}
    \MoveEqLeft
    \PP(||F(X_1,\ldots,X_n) - \EE(F(X_1,\ldots,X_n))||_\infty \geq \epsilon)
    \\
    &=
    \PP\left(|F_j(X_1,\ldots,X_n) - \EE(F_j(X_1,\ldots,X_n))| \geq \epsilon \mbox{ for some } j = 1,\ldots,d\right)
    \\
    &\leq
    \sum_{j=1}^d \PP(|F_j(X_1,\ldots,X_n) - \EE(F_j(X_1,\ldots,X_n))| \geq \epsilon)
    \\
    &\leq
    2d\exp\left(-\frac{\epsilon^2}{2\sum_{i=1}^n c_i^2}\right),
\end{align*}
which completes the proof.
\end{proof}



\smallskip

\textbf{Acknowledgement}
This research was sponsored by the Army Research Office and was accomplished under Cooperative Agreement Number W911NF-20-2-0187. 

\printbibliography

\end{document}